\let\@secnumfont\bfseries
\def\section{\@startsection{section}{1}%
  \z@{4\linespacing\@plus\linespacing}{\linespacing}%
  {\bfseries\centering}}
\def\introsection{\@startsection{section}{1}%
  \z@{3\linespacing\@plus\linespacing}{\linespacing}%
  {\bfseries\centering}}
\def\subsection{\@startsection{subsection}{2}%
   \z@{1.25\linespacing\@plus.7\linespacing}{.5\linespacing}%
   {\normalfont\bfseries}}
\def\subsectionsinline{\def\subsection{\@startsection{subsection}{2}%
  \z@{1\linespacing\@plus.7\linespacing}{-.5em}%
  {\normalfont\bfseries}}}
\theoremstyle{definition}
\newtheorem{definition}[equation]{Definition}
\newtheorem{example}[equation]{Example}
\newtheorem*{definition*}{Definition}
\newtheorem*{example*}{Example}
\newtheorem*{problem*}{Problem}
\newtheorem*{exercise*}{Exercise}
\newtheorem*{question*}{Question}
\newtheorem*{construction*}{Construction}
\theoremstyle{remark}
\newtheorem{remark}[equation]{Remark}
\newtheorem*{note*}{Note}
\newtheorem*{notation*}{Notation}
\newtheorem*{remark*}{Remark}
\theoremstyle{plain}
\newtheorem{theorem}[equation]{Theorem}
\newtheorem{corollary}[equation]{Corollary}
\newtheorem{lemma}[equation]{Lemma}
\newtheorem*{theorem*}{Theorem}
\newtheorem*{corollary*}{Corollary}
\newtheorem*{lemma*}{Lemma}
\newtheorem*{proposition*}{Proposition}
\newtheorem*{conjecture*}{Conjecture}
\newtheorem*{claim*}{Claim}
\newtheorem*{proposal*}{Proposal}
\newtheorem*{conclusion*}{Conclusion}
\newtheorem*{hypothesis*}{Hypothesis}
\numberwithin{equation}{section}
\definecolor{refkey}{rgb}{0,.6,.4}
\renewcommand{\:}{\colon}
\DeclareMathOperator{\Aut}{Aut}
\newcommand{\CC}{{\mathbb C}}
\newcommand{\CP}{{\mathbb C\mathbb P}}
\DeclareMathOperator{\End}{End}
\DeclareMathOperator{\Hom}{Hom}
\DeclareMathOperator{\id}{id}
\DeclareMathOperator{\Map}{Map}
\DeclareMathOperator{\pt}{pt}
\newcommand{\RR}{{\mathbb R}}
\newcommand{\ZZ}{{\mathbb Z}}
\newcommand{\chiup}{\raise.5ex\hbox{$\chi$}}
\newcommand{\cir}{S^1}
\newcommand{\inv}{^{-1}}
\newcommand{\mstrut}{^{\vphantom{1*\prime y\vee M}}}
\newcommand{\res}[1]{\negmedspace\bigm|\mstrut_{#1}}
\newcommand{\temsquare}{\raise3.5pt\hbox{\boxed{ }}}
\newcommand{\zmod}[1]{\ZZ/#1\ZZ}
\newcommand{\zt}{\zmod2}
\renewcommand{\cir}{\ensuremath{S^1}}
\definecolor{refkey}{rgb}{0,.8,.2}\definecolor{labelkey}{rgb}{1,0,0}
\DeclareMathOperator{\Ab}{Ab}
\DeclareMathOperator{\Bord}{Bord}
\DeclareMathOperator{\Cat}{Cat}
\DeclareMathOperator{\Euler}{Euler}
\DeclareMathOperator{\Hilb}{Hilb}
\DeclareMathOperator{\Imm}{Im}
\DeclareMathOperator{\Riem}{Riem}
\DeclareMathOperator{\Top}{Top}
\DeclareMathOperator{\Vect}{Vect}
\DeclareMathOperator{\coev}{coev}
\DeclareMathOperator{\ev}{ev}
\DeclareMathOperator{\fr}{fr}
\newcommand{\CAlg}{Alg_{\CC}}
\newcommand{\CCat}{\Cat_{\CC}}
\newcommand{\CVect}{\Vect_{\CC}}
\newcommand{\bX}{\partial X}
\newcommand{\bordG}[1]{\Bord^{G}_{#1}}
\newcommand{\bordfr}[1]{\Bord^{\fr}_{#1}}
\newcommand{\bordn}{\Bord_{\langle 0\cdots n  \rangle}}
\newcommand{\bordo}[1]{\Bord^{O}_{#1}}
\newcommand{\bordso}[1]{\Bord^{SO}_{#1}}
\newcommand{\bordt}{\Bord_{\langle 0,1,2  \rangle}}
\newcommand{\bord}[1]{\Bord_{#1}}
\newcommand{\bsD}{|\mathcal{D}|}
\newcommand{\fld}[1]{\mathscr{F}_{#1}}
\newcommand{\gpd}{/\!/} 
\newcommand{\hF}{\widehat{F}}
\newcommand{\infn}{(\infty ,n)}
\newcommand{\obs}{\mathcal{O}}
\newcommand{\pp}{\pt_+}
\newcommand{\sCfdt}{(\sCfd)^{\sim}}
\newcommand{\sCfd}{\sC^{\textnormal{fd}}}
\newcommand{\sC}{\mathcal{C}}
\newcommand{\sDt}{\sD^{\sim}}
\newcommand{\sD}{\mathcal{D}}
\newcommand{\sH}{\mathscr{H}}
\newcommand{\sX}{\mathscr{X}}
\newcommand{\tF}{\widetilde{F}}
\newcommand{\tbordfr}{\Bord_{\langle n-1,n  \rangle}^{\fr}}
\newcommand{\tbordr}{\Bord_{\langle n-1,n  \rangle}^{\Riem}}
\newcommand{\tbordso}{\Bord_{\langle n-1,n  \rangle}^{SO}}
\newcommand{\tbord}{\Bord_{\langle n-1,n  \rangle}}
\renewcommand{\AA}{\mathbb{A}}
\begin{document}

\abovedisplayskip18pt plus4.5pt minus9pt
\belowdisplayskip \abovedisplayskip
\abovedisplayshortskip0pt plus4.5pt
\belowdisplayshortskip10.5pt plus4.5pt minus6pt
\baselineskip=15 truept
\marginparwidth=55pt

\renewcommand{\labelenumi}{\textnormal{(\roman{enumi})}}



 \title[The cobordism hypothesis]{The Cobordism Hypothesis} 
 \author[D. S. Freed]{Daniel S.~Freed}
 \thanks{The work of D.S.F. is supported by the National Science Foundation
under grant DMS-0603964}
 \address{The University of Texas at Austin \\ Mathematics Department RLM
8.100 \\ 2515 Speedway Stop C1200\\ Austin, TX 78712-1202}
 \email{dafr@math.utexas.edu}
 \date{July 5, 2012}
 \begin{abstract} 
 In this expository paper we introduce extended topological quantum field
theories and the cobordism hypothesis.
 \end{abstract}
\maketitle

   \section{Introduction}\label{sec:1}

The \emph{cobordism hypothesis} was conjectured by Baez-Dolan~\cite{BD} in
the mid 1990s.  It has now been proved by Hopkins-Lurie in dimension two and
by Lurie in higher dimensions.  There are many complicated foundational
issues which lie behind the definitions and the proof, and only a detailed
sketch~\cite{L1} has appeared so far.\footnote{Nonetheless, we use `theorem'
and its synonyms in this manuscript.  The foundations are rapidly being
filled in and alternative proofs have also been carried out, though none has
yet appeared in print.}  The history of the Baez-Dolan conjecture goes most
directly through quantum field theory and its adaptation to low-dimensional
topology.  Yet in retrospect it is a theorem about the structure of manifolds
in all dimensions, and at the core of the proof lies Morse theory.  Hence
there are two routes to the cobordism hypothesis: algebraic topology and
quantum field theory.

Consider the abelian group~$\Omega ^{SO}_0$ generated by compact oriented
0-dimensional manifolds, that is, finite sets~$Y$ of points each labeled
with~$+$ or~$-$.  The group operation is disjoint union.  We deem~$Y_0$
equivalent to~$Y_1$ if there is a compact oriented 1-manifold~$X$ with
oriented boundary $Y_1\amalg -Y_0$.  Then a basic theorem in differential
topology~\cite[Appendix]{Mi1} asserts that $\Omega ^{SO}_0$~is the free
abelian group with a single generator, the positively oriented
point~$\pp$.\footnote{Two important remarks: (1)\ we can replace orientations
with framings; (2)\ for unoriented manifolds the group~$\Omega ^{O}_0$ is not
free on one generator, but rather there is a relation and $\Omega ^O_0\cong
\zt$.}  This result is the cornerstone of smooth intersection theory.  From
the point of view of algebraic topology the cobordism hypothesis is a similar
statement about a more ornate structure built from smooth manifolds.  The
simplest version is for framed manifolds.  The language is off-putting if
unfamiliar, and it will be explained in due course.

  \begin{theorem}[Cobordism hypothesis: heuristic algebro-topological
  version]\label{thm:13}
 For $n\ge1$, $\bordfr n$~is the free symmetric monoidal $(\infty
,n)$-category with duals generated by~$\pp$.
  \end{theorem}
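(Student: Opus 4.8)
The plan is to recast the statement as a universal property and then verify it. Being the free symmetric monoidal $\infn$-category with duals on $\pp$ means precisely that for every symmetric monoidal $\infn$-category $\sC$ with duals, evaluation at the generator,
\[ Z \longmapsto Z(\pp), \]
induces an equivalence from the space $\operatorname{Fun}^{\otimes}(\bordfr n,\sC)$ of symmetric monoidal functors onto $\sCfdt$, the underlying $\infty$-groupoid of fully dualizable objects of $\sC$. Everything reduces to producing this equivalence, and I would organize the argument into the two directions of an adjoint-style comparison.

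For the easy direction I would first check that evaluation is well defined, i.e.\ that $Z(\pp)$ is always fully dualizable. This holds because $\pp$ is itself fully dualizable inside $\bordfr n$: its dual is the oppositely framed point, the unit and counit of the duality are the two framings of the interval, and the higher adjunction data witnessing $n$-dualizability are supplied by explicit framed bordisms with corners (the higher ``zig-zag'' manifolds). A symmetric monoidal functor carries this duality data to duality data in $\sC$, so $Z(\pp)$ is fully dualizable; this is the formal, low-cost half.

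The substance is the converse: each fully dualizable object $c$ should extend to an essentially unique symmetric monoidal functor with $Z(\pp)\simeq c$. Here I would bring in Morse theory, the geometric engine of the proof. A framed bordism, presented as a $k$-morphism of $\bordfr n$, admits a framed handle decomposition; each handle is an elementary bordism whose attaching data is governed by duality and adjunction, and the full dualizability of $c$ is exactly what is needed to assign a value to it. The values are then forced, which produces a candidate functor $Z_c$; one must prove that different Morse functions give coherently homotopic answers, so that $Z_c$ is well defined, and that the space of all such extensions is contractible. To control this I would argue by induction on $n$, using a filtration of $\bordfr n$ by handle index and a base case built from the framed refinement of the $0$-dimensional result $\Omega^{SO}_0\cong\ZZ$ quoted in the introduction, propagating dualizability and coherence one categorical level at a time.

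The main obstacle is precisely this last coherence statement. At the level of objects and $1$-morphisms, generation by $\pp$ together with the duality relations is classical and close to the intersection-theoretic picture behind $\Omega^{SO}_0$; what is genuinely hard is to upgrade ``generated'' to an equivalence of mapping spaces in the $\infn$-categorical sense, tracking every higher morphism and every higher homotopy. Making the handle decompositions natural up to coherent homotopy, and showing that the contractibility of the space of extensions survives the induction, is where essentially all of the difficulty resides---and is what forces the introduction of Lurie's full apparatus of $\infn$-categories and of the cobordism hypothesis with singularities.
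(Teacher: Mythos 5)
Your proposal takes essentially the same approach as the paper: you recast the heuristic statement as the precise mapping-space equivalence $\Hom(\bordfr n,\sC)\simeq\sCfdt$ (this is exactly the paper's Theorem~\ref{thm:26}), verify the easy direction via full dualizability of $\pp$ in $\bordfr n$, and for the hard direction invoke Morse-theoretic decomposition into elementary bordisms carrying duality/adjunction data, with well-definedness reduced to coherence and contractibility statements---precisely the intuition the paper gives at the end of \S\ref{sec:6}, where the two key inputs are the contractibility of the space of duality data and Igusa's connectivity theorem for generalized framed Morse functions. Note that the paper itself offers only this rough sketch of Lurie's argument rather than a complete proof, and your proposal, like the paper, correctly locates (without resolving) the essential difficulty in organizing the higher-categorical coherence data so that these contractibility theorems can be applied.
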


\noindent
 The `Bord' in~$\bordfr n$ stands for `bordism',\footnote{`Bordism' replaces
the older `cobordism', as bordism is part of homology whereas cobordism is
part of cohomology~\cite{A1}.} and $\pp$~is now the point with the standard
framing.  $\bordfr n$~is an elaborate algebraic gadget which encodes
$n$-framed manifolds with corners of dimensions~$\le n$ and tracks gluings
and disjoint unions.  One of our goals is to motivate this elaborate
algebraic structure.

An \emph{extended topological field theory} is a representation of the
bordism category, i.e., a homomorphism $F\:\bordfr n\to\sC$.  The
codomain~$\sC$ is a symmetric monoidal $\infn$-category, typically linear in
nature.  In important examples $F$~assigns a complex number to every closed
$n$-manifold and a complex vector space to every closed $(n-1)$-manifold.

  \begin{theorem}[Cobordism hypothesis: weak quantum field theory
  version]\label{thm:14} 
 A homomorphism\\$F\:\bordfr n\to\sC$ is determined by~$F(\pp)$. 
  \end{theorem}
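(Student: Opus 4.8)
The plan is to deduce Theorem~\ref{thm:14} directly from Theorem~\ref{thm:13} by invoking the universal property that characterizes a free object. Recall the general principle: if an object~$\mathcal{F}$ in some category of algebraic structures is \emph{free} on a generating set~$S$, then a structure-preserving map out of~$\mathcal{F}$ is completely determined by its restriction to~$S$, and conversely any assignment of~$S$ into a target (subject to whatever constraints the structure imposes on the images) extends to a unique such map. Here the relevant category is that of symmetric monoidal $\infn$-categories with duals, the structure-preserving maps are the homomorphisms~$F$ appearing in the statement, and by Theorem~\ref{thm:13} the bordism category~$\bordfr n$ is free on the single generator~$\pp$.

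First I would make precise the notion of homomorphism: $F$ is a symmetric monoidal functor of $\infn$-categories that carries the duality data of~$\bordfr n$ to duality data in~$\sC$. In particular $F(\pp)$ cannot be an arbitrary object of~$\sC$; because $\pp$ is fully dualizable in~$\bordfr n$---it has a dual, the dual has a dual, and so on, with all the attendant evaluation and coevaluation morphisms and their iterated adjoints---its image $F(\pp)$ inherits the same property and must therefore be a fully dualizable object of~$\sC$.

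With these conventions fixed, the deduction is formal. Given two homomorphisms $F,F'\:\bordfr n\to\sC$ with $F(\pp)\simeq F'(\pp)$, the universal property supplied by Theorem~\ref{thm:13} forces $F\simeq F'$, since any two extensions of the same generator-datum through a free object must agree. As $\pp$ is the unique generator, the single piece of data $F(\pp)$ pins down~$F$, which is precisely the assertion of Theorem~\ref{thm:14}.

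The main obstacle is not the formal implication but the precise meaning of \emph{determined} in the $\infn$-categorical setting. Unlike the $0$-dimensional prototype~$\Omega^{SO}_0$, where ``free on one generator'' is an honest statement about a discrete abelian group, here freeness must be interpreted homotopically: the collection of homomorphisms $\bordfr n\to\sC$ itself forms a space (or higher groupoid), and ``$F$ is determined by $F(\pp)$'' should mean that evaluation at~$\pp$ carries this space equivalently onto the space of fully dualizable objects of~$\sC$. Checking that the universal property of Theorem~\ref{thm:13} is strong enough to yield such an equivalence---rather than a mere bijection on isomorphism classes---requires tracking all the higher coherences, and it is exactly this homotopical refinement that the genuine cobordism hypothesis, and its proof via Morse theory, is engineered to supply.
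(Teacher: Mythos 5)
Your deduction of Theorem~\ref{thm:14} from the freeness assertion of Theorem~\ref{thm:13} is exactly the paper's implicit logic: the two theorems are presented as heuristic reformulations of one another, a homomorphism out of a free object being determined by its value on the generator~$\pp$, whose image must accordingly be fully dualizable. You also locate the genuine content where the paper does---in the precise homotopical formulation (Theorem~\ref{thm:26}), where ``determined'' becomes the assertion that evaluation at~$\pp$ gives a homotopy equivalence $\Hom(\bordfr n,\sC)\to\sCfdt$, and that refinement is supplied not by formal universal-property reasoning but by the Morse--Cerf-theoretic proof sketched in~\S\ref{sec:6}.
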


\noindent
 The object~$F(\pp)\in \sC$ satisfies stringent finiteness conditions
expressed in terms of dualities, and the real power of the cobordism
hypothesis is an existence statement: if $x\in \sC $ is
$n$-\emph{dualizable}, then there exists a topological field theory~$F$
with~$F(\pp)=x$.  Precise statements of the cobordism hypothesis appear
in~\S\ref{sec:6}. 
 
Our plan is to build up gradually to the categorical complexities inherent in
extended field theories and the cobordism hypothesis.  So in the next two
sections we take strolls along the two routes to the cobordism hypothesis:
algebraic topology~(\S\ref{sec:2}) and quantum field theory~(\S\ref{sec:3}).
Section~\ref{sec:4} is an extended introduction to non-extended topological
field theory.  The simple examples discussed there only hint at the power of
this circle of ideas.  In~\S\ref{sec:5} we turn to extended field theories
and so also to higher categories.  The cobordism hypothesis is the subject
of~\S\ref{sec:6}, where we state a complete version in Theorem~\ref{thm:26}.
The cobordism hypothesis connects in exciting ways to other parts of
topology, geometry, and representation theory as well as to some contemporary
ideas in quantum field theory.  A few of these are highlighted
in~\S\ref{sec:7}.

The manuscript~\cite{L1} has leisurely introductions to higher categorical
ideas and to the setting of the cobordism hypothesis, in addition to a
detailed sketch of the proof and applications.  The original paper~\cite{BD}
is another excellent source of expository material.  Additional recent
expositions are available in~\cite{L3}, \cite{Te1}.  We have endeavored to
complement these expositions rather than duplicate them.  I warmly thank
David Ben-Zvi, Andrew Blumberg, Lee Cohn, Tim Perutz, Ulrike Tillmann, and
the referee for their comments and suggestions.

   \section{Algebraic topology}\label{sec:2}

The most basic maneuvers in algebraic topology extract algebra from spaces.
For example, to a topological space~ $X$ we associate a sequence of abelian
groups ~$\bigl\{H_q(X)\bigr\}$.  There are several constructions of these
\emph{homology groups}, but for nice spaces they are all
equivalent~\cite{Sp}.  The homology construction begins to have teeth only
when we tell how homology varies with~ $X$.  One elementary assertion is that
if $X\simeq Y$ are homeomorphic spaces, then the homology groups are
isomorphic.  Thus numerical invariants of homology groups, such as the rank,
are homeomorphism invariants of topological spaces: Betti numbers.  But it is
much more powerful to remember the isomorphisms of homology groups associated
to homeomorphisms, and indeed the homomorphisms associated to arbitrary
continuous maps.  This is naturally encoded in the algebraic structure of a
\emph{category}.  Here is an informal definition; see standard texts
(e.g.~\cite{Mc}) for details.

  \begin{definition}[]\label{thm:1}
 A \emph{category}~$\sC$ consists of a set\footnote{We do not worry about
technicalities of set theory in this expository paper.}~$C_0$ of
objects~$\{x\}$, a set~$C_1$ of morphisms $\{f\:x\to y\}$, identity elements
$\{id_x\:x\to x\}$, and an associative composition law $f,g\longmapsto g\circ
f$ for morphisms $x\xrightarrow{f}y$ and $y\xrightarrow{g}z$.  If $\sC,\sD$
are categories then a \emph{homomorphism}\footnote{The word `functor' is
usually employed here, but 'homomorphism' is more consistent with standard
usage elsewhere in algebra.}  $F\:\sC\to\sD$ is a pair~$(F_0,F_1)$ of maps of
sets $F_i\:\sC_i\to \sD_i$ which preserves compositions.
  \end{definition}

\noindent
 More formally, there are source and target maps $C_1\to C_0$, identity
elements are defined by a map $C_0\to C_1$, and composition is a map from a
subset of $C_1\times C_1$ to~$C_1$---the subset consists of pairs of
morphisms for which the target of the first equals the source of the second.
A homomorphism also preserves the source and target maps.  Topological spaces
comprise the objects of a category~$\Top$ whose morphisms are continuous
maps; abelian groups comprise the objects of a category~$\Ab$ whose morphisms
are group homomorphisms.  Some basic properties of homology groups are
summarized by the statement that
  \begin{equation}\label{eq:1}
     H_q\:(\Top,\amalg )\longrightarrow (\Ab ,\oplus )
  \end{equation}
is a homomorphism.  We explain the~`$\amalg $' and~`$\oplus $' in the next
paragraph.

The homomorphism property does not nearly characterize homology, and we can
encode many more properties via extra structure on~$\Top$ and~$\Ab$.  We
single out one here, an additional operation on objects and morphisms.  If
$X_1,X_2$ are topological spaces there is a new space $X_1\amalg X_2$, the
\emph{disjoint union}.  The operation $X_1,X_2\mapsto X_1\amalg X_2$ has
properties analogous to a commutative, associative composition law on a set.
For example, the empty set~$\emptyset $ is an identity for disjoint union in
the sense that $\emptyset \amalg X$ is canonically identified with~$X$ for
all topological spaces~$X$.  Furthermore, if $f_i\:X_i\to Y_i,\;i=1,2$ are
continuous maps, there is an induced continuous map $f_1\amalg f_2\:X_1\amalg
X_2\to Y_1\amalg Y_2$ on the disjoint union.  An operation on a category with
these properties is called a \emph{symmetric monoidal structure}, in this
case on the category~$\Top$.  Similarly, the category~$\Ab$ of abelian groups
has a symmetric monoidal structure given by direct sum: $A_1,A_2\to A_1\oplus
A_2$.  The homology maps~\eqref{eq:1} are homomorphisms of symmetric monoidal
categories: there is a canonical identification of~$H_q(X_1\amalg X_2)$ with
$H_q(X_1)\oplus H_q(X_2)$.

  \begin{remark}[]\label{thm:2}
 Homology is \emph{classical} in that disjoint unions map to \emph{direct
sums}.  We will see that a characteristic property of \emph{quantum} systems
is that disjoint unions map to \emph{tensor products}.  The passage from
classical to quantum is (poetically) a passage from addition to
multiplication, a kind of exponentiation.
  \end{remark}

Our interest here is not in all topological spaces, but rather in smooth
manifolds.  Fix a positive integer~$n$.

  \begin{definition}[]\label{thm:3}
 Let $Y_0,Y_1$~be smooth compact $(n-1)$-dimensional manifolds without
boundary.  A \emph{bordism from~$Y_0$ to~$Y_1$} is a compact $n$-dimensional
manifold~$X$ with boundary, a decomposition $\bX=\bX_0\amalg \bX_1$, and
diffeomorphisms $Y_i\to\bX_i,\,i=1,2$. 
  \end{definition}
\begin{figure}[h!]
 \centering\includegraphics[scale=.7]{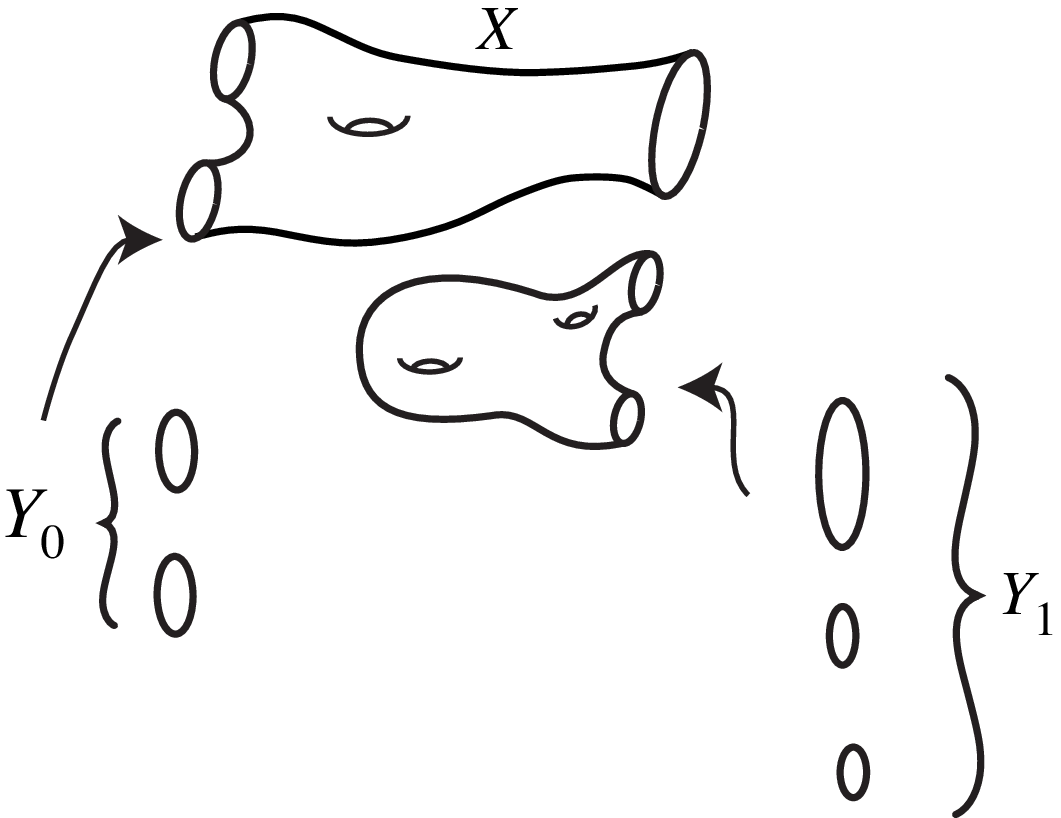}
\caption{A bordism $X\:Y_0\to Y_1$}  \label{fig:1}
 \end{figure}

\noindent
 Figure~1 depicts an example which emphasizes that manifolds need not be
connected.  The empty set~$\emptyset $ is a manifold of any dimension.  So a
\emph{closed} $n$-manifold---that is, a compact manifold without
boundary---is a bordism from~$\emptyset ^{n-1}$ to~$\emptyset ^{n-1}$.   Note
also that the disjoint union of smooth manifolds is a smooth manifold, and
the disjoint union of bordisms is a bordism.
 
To turn bordism into algebra we observe that bordism defines an equivalence
relation: closed $(n-1)$-manifolds $Y_0,Y_1$ are bordant if there exists a
bordism from~$Y_0$ to~$Y_1$.  (Observe that to prove transitivity it is
convenient to modify Definition~\ref{thm:3} so that boundary identifications
are between the manifolds $[0,1 )\times Y_0$, $(-1,0]\times Y_1$ and open
collar neighborhoods of~$\bX_0$, $\bX_1$: smooth functions glue nicely on
open sets.)  Disjoint union defines an abelian group structure on the
set~$\Omega^O _{n-1}$ of equivalence classes.  For example, $\Omega ^O_0\cong
\zt$ is generated by a single point.  Twice a point is the disjoint union of
two points, and as two points bound a closed interval, two points are bordant
to the empty 0-manifold.  Life is more interesting when we consider manifolds
with extra topological structure.  For example, there are bordism
groups~$\Omega _q^{SO}$ of oriented manifolds.  An orientation on a
0-manifold consisting of a single point is a choice of~$+$ or~$-$.  Then
$\Omega _0^{SO}\cong \ZZ$ by the map which sends a finite set of oriented
points to the number of positive points minus the number of negative points.
This is a foundational result in differential topology which enables oriented
counts in intersection theory~\cite{Mi1}.  Another interesting structure is a
\emph{stable framing}.  It arises in the Pontrjagin-Thom construction.  Let
$f\:S^{q+N}\to S^N$ be a smooth map.  By Sard's theorem there is a regular
value~$p\in S^N$, whence $M:=f\inv (p)\subset S^{q+N}$ is a smooth
$q$-dimensional submanifold.  Also, a basis of~$T_pS^N$ pulls back under~$f$
to a global framing of the normal bundle to~$M$ in~$S^N$.  \begin{figure}[h]
 \centering
 \includegraphics[scale=.5]{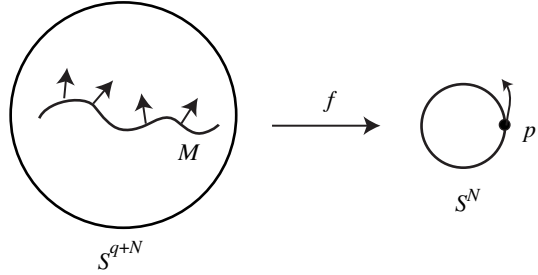}
 \caption{The Pontrjagin-Thom construction}\label{fig:2} 
 \end{figure}
If we deform~$p$ to another regular value, then the framed manifold~$M$
undergoes a bordism.  The same is true if $f$~deforms to a smoothly homotopic
map.  The precise correspondence works in the stable limit~$N\to\infty $: the
stably framed bordism group~$\Omega _q^{\fr}$ is isomorphic to the stable
homotopy group of the sphere $\lim\limits_{N\to\infty }\pi _{q+N}(S^N)$.
This is the most basic link between bordism and homotopy theory.
 
Bordism has a long history in algebraic topology.  By~1950 it
appears\footnote{According to~\cite[\S6]{May} a 1950~ Russian paper of
Pontrjagin contains bordism groups; see~\cite{P} for a later account.
Thom~\cite{T} also cites work of Rohlin relevant to computations of bordism
in low dimensions, but I do not know if Rohlin phrased them in terms of
bordism groups.} that Pontrjagin had defined abelian groups based on the
notion of a bordism, though it was Thom~\cite{T} who made the first
systematic computations of bordism groups using homotopy theory.  There are
many variations according to the type of manifold: oriented, spin, framed,
etc.  Theory and computation of bordism groups were an important part of
algebraic topology in the 1950s and 1960s, and they found applications in
other parts of topology and geometry.  For example, Hirzebruch's 1954~proof
of the Riemann-Roch theorem was based on bordism computations, as was the
first proof of the Atiyah-Singer index theorem~\cite{Pa} in~1963.
 
The bordism group of $d$-dimensional manifolds arises when
$(d+1)$-dimensional bordisms are used to define an equivalence relation.
Disjoint union of $d$-manifolds gives the abelian group structure.  One
lesson from classical algebraic topology is that the passage from Betti
numbers to homology groups is very fruitful.  The analog here is to
\emph{track} bordisms between closed manifold, not merely to observe their
existence---in our ``categorified'' world we \emph{encode} the bordism as a
map.  Segal~\cite{Se2} introduced a bordism category of Riemann surfaces in
his axiomatization of 2-dimensional conformal field theory, which inspired
Atiyah~\cite{A2} to axiomatize topological field theories in any dimensions
using bordism categories of smooth manifolds with no continuous geometric
structure (such as a metric or conformal structure).
Tillmann~\cite{Til1,Til2} observed that the classifying space of the bordism
category, which has the abelian group-like operation of disjoint union, is a
\emph{spectrum} in the sense of stable homotopy theory.  Together with
Madsen~\cite{MT} they conjecturally identify the classifying spectrum of an
enriched bordism category---a step towards the $\infty $-categories we meet
in~\S\ref{sec:5}---and show that their conjecture implies Mumford's
conjecture~\cite{Mu} about the rational cohomology of the mapping class
group.  The Madsen-Tillmann conjecture was subsequently proved in~\cite{MW}
and is now known as the Madsen-Weiss theorem.  The relation with the spectra
Thom used to compute bordism groups is elucidated in~\cite[\S3]{GMTW}, where
another proof is given.

For now we restrict to manifolds with boundary---no corners---and so organize
closed $(n-1)$-manifolds into a symmetric monoidal category which refines the
abelian group~$\Omega _{n-1}$.

  \begin{definition}[]\label{thm:4}
 $\tbord$~is the symmetric monoidal category whose objects are compact
$(n-1)$-manifolds and in which a morphism $X\:Y_0\to Y_1$ is a bordism
from~$Y_0$ to~$Y_1$, up to diffeomorphism.  The monoidal structure is
disjoint union. 
  \end{definition}

\noindent
 So now a bordism is a map---a morphism in a category---and the boundary is
divided into incoming (domain) boundary components and outgoing (codomain)
boundary components.  Heuristically, we say there is an ``arrow of time'', at
least at the boundary; in pictures we draw a global arrow of time.
Composition (Figure~\ref{fig:3}) is defined by gluing bordisms.
\begin{figure}[h]
 \centering
 \includegraphics[scale=.7]{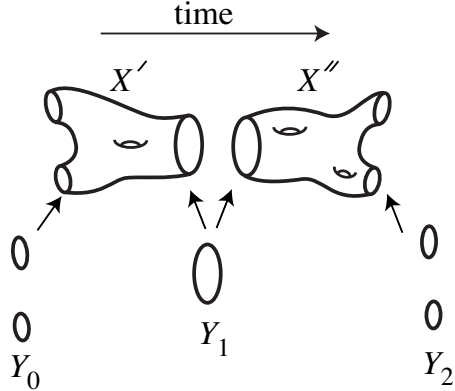}
 \caption{Composition of bordisms}\label{fig:3}
 \end{figure}
We identify diffeomorphic bordisms---the diffeomorphism must commute with the
boundary identifications---and so obtain a strictly associative composition
law.  The identity morphism $Y\to Y$ is the cylinder $[0,1]\times Y$ with
obvious boundary identifications.  There are variants $\tbordso$ and
$\tbordfr$ for oriented and framed manifolds, but with one important change:
in $\tbordfr$ the morphisms~$X$ carry framings of the {tangent} bundle
(not stabilized) and the objects~$Y$ carry framings of~$(1)\oplus TY$, where
`$(1)$'~here denotes the trivial real line bundle of rank one.  Notice the
contrast: traditional Pontrjagin-Thom theory has \emph{stable} framings of
the \emph{normal} bundle, whereas $\tbordfr$~has \emph{unstable} framings of
the \emph{tangent} bundle.
 
By analogy to the homology homomorphism~\eqref{eq:1} we are led to the
following definition. 

  \begin{definition}[\cite{A2}]\label{thm:5}
 An \emph{$n$-dimensional topological field theory} is a homomorphism
  \begin{equation}\label{eq:2}
     F\:\tbord\longrightarrow (\Ab,\otimes ) 
  \end{equation}
of symmetric monoidal categories. 
  \end{definition}

\noindent
 As telegraphed in Remark~\ref{thm:2}, in a quantum field theory disjoint
unions map to \emph{tensor products}, not direct sums.  There are many
variations on this definition.  The domain can be a bordism category of
smooth manifolds with extra structure, or even of singular manifolds.  The
codomain may be replaced by any symmetric monoidal category, algebraic or
not.  We introduce a more drastic variant of Definition~\ref{thm:5}
in~\S\ref{sec:5}.  A typical choice for the codomain is $(\CVect,\otimes )$,
the category of complex vector spaces under tensor product.  A topological
field theory with values in~$\CVect$ is a linearization---a linear
representation---of manifolds.

We have been led naturally to Definition~\ref{thm:5} by combining basic ideas
in homology and bordism.  But this is hardly the historical path!  For that
we turn in the next section to notions in quantum field theory.  Before
leaving bordism, though, we pause to remind the reader of the connection with
Morse theory.

Intuitively, a Morse function refines the arrow of time to a particular time
function.  Let $X\:Y_0\to Y_1$~be an $n$-dimensional bordism.  A function
$f\:X\to\RR$ is compatible with the bordism structure if there exist
$t_0<t_1$ such that $t_0,t_1$~are regular values of~$f$, the image of~$f$ is
contained in~$[t_0,t_1]$, and $Y_i=f\inv (t_i)$.  Furthermore, $f$~is a
\emph{Morse function} if it has finitely many isolated \emph{nondegenerate}
critical points.  The main theorems in Morse theory~\cite{Mi2} assert that
slices~$f\inv (t)$ and~$f\inv (t')$ are diffeomorphic if there are no
critical values between~$t$ and~$t'$, and at an isolated critical point there
is a topology change which is described by a standard surgery.  For example,
\begin{figure}[h]
 \centering
 \includegraphics[scale=.8]{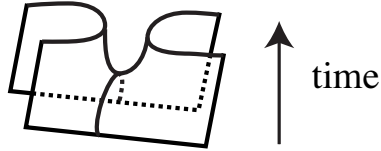}
 \caption{An elementary bordism}\label{fig:4}
 \end{figure}
 in Figure~\ref{fig:4} the local slice evolves from the two parallel line
segments at the bottom to the two curves at the top; the saddle depicts the
\emph{elementary bordism} which connects the two local slices.   
\begin{figure}[h]
 \centering
 \includegraphics[scale=.7]{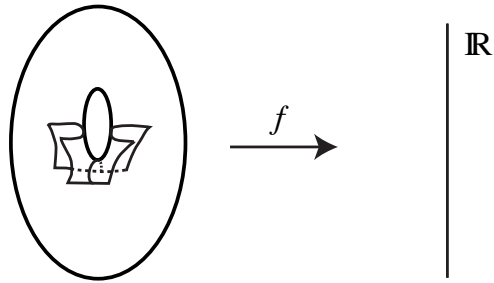}
 \caption{A Morse function}\label{fig:5}
 \end{figure}
Figure~\ref{fig:5} displays the standard example of a Morse function on the
torus---the height function---and embeds the elementary bordism of
Figure~\ref{fig:4} into a neighborhood of one of the critical points of
index~1.   

  \begin{remark}[]\label{thm:34}
 The local description of the topology change at a critical point uses
a manifold with \emph{corners}, as in Figure~\ref{fig:4}.  Manifolds with
boundary and no corners do not suffice.  The additional locality afforded by
admitting corners---and eventually higher codimensional corners---is a
crucial idea for the cobordism hypothesis; see~\S\ref{sec:5}. 
  \end{remark}

Morse functions exist, as a consequence of Sard's theorem.  This means that
any bordism can be decomposed as a composition of elementary bordisms, one
for each critical point.  Manipulations with Morse functions are a key
ingredient in Milnor's presentation~\cite{Mi3} of Smale's $h$-cobordism
theorem~\cite{Sm}.  The space of Morse functions on a fixed bordism has many
components: Morse functions in different components induce qualitatively
different decompositions into elementary bordisms.  Cerf~\cite{C} relaxed the
Morse condition to construct a \emph{connected} space of functions.  This
enables a systematic study of transitions between decompositions.  For
example, Cerf theory is the basis for Kirby calculus~\cite{K}, which
describes links in 3-manifolds and 4-manifolds.  As we shall see it is also a
crucial tool for constructing topological field theories.

An elementary illustrative example of a Cerf transition is the family of
functions
  \begin{equation}\label{eq:3}
     f_t(x) = \frac {x^3}3 - tx,\qquad x,t\in \RR. 
  \end{equation}
For~$t>0$ this is a Morse function with nondegenerate critical points
at~$x=\pm\sqrt{t}$.  For~$t<0$ it is a Morse function with no critical
points.  At~$t=0$ the function fails to be Morse: $x=0$~is a degenerate
critical point.  So as $t$~increases from negative to positive two critical
points are born on the $x$-line, and they separate at birth.  In the other
direction, as $t$~decreases from positive to negative the two critical points
collide and annihilate.  This simple ``birth-death transition'' is all that is
needed to connect different components of Morse functions.

   \section{Quantum field theory}\label{sec:3}

For much of its history quantum field theory was tied to four spacetime
dimensions and a handful of physically realistic examples.  As opposed to
quantum mechanics, where the underlying theory of Hilbert spaces and operator
theory has been fully developed, the analytic underpinnings of quantum field
theory remain unsettled.  Still, there has been a huge transformation over
the past three decades.  Quantum field theorists now study a large set of
examples in a variety of dimensions, not all of which are meant to be
physically relevant.  A deeper engagement with mathematicians and mathematics
has led physicists to study models whose consequences are more relevant to
geometry than to accelerators.  Topological and algebraic aspects of quantum
field theories have come to the fore.  From another direction string theory
has illuminated the subject, and there are new ties to condensed matter
theory as well.
 
In this section we briefly sketch how Definition~\ref{thm:5} of a topological
quantum field theory emerges from physics.  Our exposition is purely formal,
extracting the structural elements which most directly lead to our goal.
Let's begin with quantum mechanics, which is a 1-dimensional quantum field
theory.  (The dimension of a theory refers to spacetime, and at least in
mainstream theories there is a single time dimension.  Thus a 1-dimensional
theory only has time; space is treated externally.)  The basic ingredients
are a complex separable Hilbert space~$\sH$ and for each time interval of
length~$t$ a unitary operator
  \begin{equation}\label{eq:9}
     U_t = e^{-itH/\hbar} .
  \end{equation}
Here $H\:\sH\to\sH$~is the self-adjoint \emph{Hamiltonian} which describes
the quantum system, and $\hbar$~is Planck's constant.  The pure \emph{states}
of the system are vectors (really complex lines of vectors) in~$\sH$, and the
unitary operators~\eqref{eq:9} describe the evolution of a state in time.
Self-adjoint operators~$\obs$ on~$\sH$ act on the system---they are the
\emph{observables}---and the physics is encoded in \emph{expectation values}
  \begin{equation}\label{eq:4}
     \langle \Omega ,U_{t_n}\obs \mstrut _n\cdots U_{t_2}\obs \mstrut
     _2U_{t_1}\obs \mstrut _1U_{t_0}\Omega \rangle. 
  \end{equation}
In this expression the state~$\Omega $ evolves for time~$t_0$, is acted on by
the operator~$\obs_1$, then evolves for time~$t_1$, then is acted on by the
operator~$\obs_2$, etc.  See Figure~\ref{fig:5a} for a pictorial
representation.
  \begin{figure}[h]
  \centering
  \includegraphics[scale=.8]{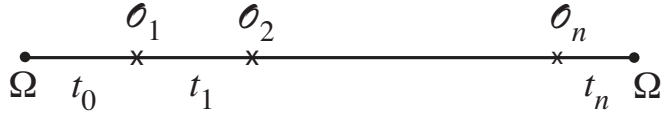}
  \caption{Vacuum expectation value in quantum mechanics}\label{fig:5a}
  \end{figure}
We recommend~\cite{Ma,Fa} for structural expositions of mechanics which
elucidate the pairing of states and observables.
 
It is convenient and powerful to analytically continue time~$t$ from the real
line to the complex line with the restriction $\Imm t<0$.  Real times are now
at the boundary of allowed complex times.  If the Hamiltonian~$H$ is
nonnegative, and $\Imm t<0$, then the evolution operator~$e^{-itH/\hbar}$ is
a contracting operator.  \emph{Wick rotation} to imaginary time is the
further restriction to purely imaginary~$t=\tau /\sqrt{-1}$, where the
\emph{Euclidean time}~$\tau $ is strictly positive.  We associate the
Euclidean contracting evolution~$F_\tau =e^{-\tau H/\hbar}$ to an interval of
length~$\tau $, that is, to a compact, connected Riemannian 1-manifold with
boundary whose total length is~$\tau $.  The evolution obeys a semigroup law
  \begin{equation}\label{eq:5}
     F_{\tau _2+\tau _1} = F_{\tau _2}\circ F_{\tau _1}, 
  \end{equation}
as illustrated in Figure~\ref{fig:6}.  This is already reminiscent of bordism.
  \begin{figure}[h]
  \centering
  \includegraphics[scale=.8]{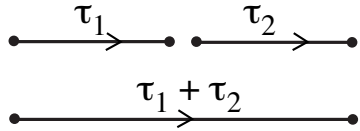}
  \caption{Composition of 1-dimensional bordisms}\label{fig:6}
  \end{figure}
We can imagine a bordism category $\Bord_{\langle 0,1 \rangle}^{\Riem}$ whose
objects are compact oriented 0-manifolds and whose morphisms are compact
Riemannian oriented 1-manifolds with boundary.  The semigroup law for the
evolution of a quantum mechanical system is encoded in the statement that
  \begin{equation}\label{eq:6}
     F\: \Bord_{\langle 0,1 \rangle}^{\Riem}\longrightarrow \Hilb 
  \end{equation}
is a homomorphism to the category of Hilbert spaces and contracting linear
maps.  Notice that $F$~ encodes more than evolution.  For example, we demand
that $F$~be a homomorphism of \emph{symmetric monoidal} categories mapping
disjoint unions to tensor products, which encodes the idea that the state
space of the union of quantum mechanical systems is a tensor product.  Exotic
``evolutions'' are now possible; see Figure~\ref{fig:6a}.  In a more careful
axiomatization~\cite{Se1} one takes the codomain to be a category of
topological vector spaces; then the Hilbert space structure emerges more
organically from the geometry, as do the operator insertions in~\eqref{eq:4}.
  \begin{figure}[h]
  \centering
  \includegraphics[scale=.8]{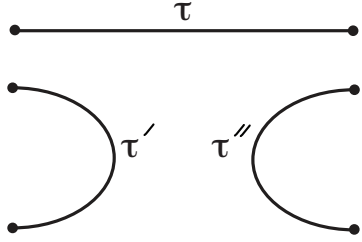}
  \caption{Exotic evolutions in quantum mechanics}\label{fig:6a}
  \end{figure}

It is a small step now to pass from the formal description~\eqref{eq:6} of a
quantum mechanical system to the assertion that an $n$-dimensional quantum
field theory is a homomorphism 
  \begin{equation}\label{eq:7}
     F\:\tbordr\longrightarrow \Hilb
  \end{equation}
from the bordism category of Riemannian $n$-dimensional bordisms
(``Riemannian spacetimes'') to the category of Hilbert spaces (better:
topological vector spaces).  If $X$~is such a bordism, and $x\in X$ a point
not on the boundary, then the boundary sphere of the geodesic ball of
sufficiently small radius~$r$ maps under~$F$ to a topological vector
space~$\sH_r$, and the limit as~$r\to0$ is a topological vector space of
operators associated to the point~$x$.  We can approximate it by the
topological vector space at some small finite radius~$r_0$.  Remove an open
ball of radius~$r_0$ about~$x$.  Choose the arrow of time so that the new
boundary component---the sphere of radius~$r_0$ about~$x$---is incoming.
  \begin{figure}[h]
  \centering
  \includegraphics[scale=.8]{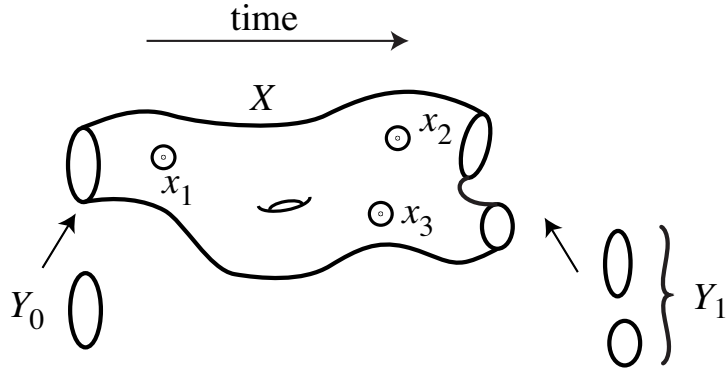}
  \caption{Operator insertions}\label{fig:7}
  \end{figure}
For example, the bordism in Figure~\ref{fig:7} has incoming boundary~$Y_0$
union the spheres about~$x_1$, $x_2$, and~$x_3$ and outgoing boundary~$Y_1$.
A field theory~$F$ determines topological vector spaces~$F(Y_0), F(Y_1)$ for
the boundary components and then topological vector spaces~$V_1,V_2,V_3$
associated to the points~$x_1,x_2,x_3$.  The bordism~$X$ goes over to a
linear map
  \begin{equation}\label{eq:8}
     F(X)\: V_1\otimes V_2\otimes V_3\longrightarrow \Hom\bigl(F(Y_0),F(Y_1)
     \bigr). 
  \end{equation}
This is the sense in which the topological vector spaces~$V_i$ attach a space
of operators to~$x_i$, analogously to the operators which appear
in~\eqref{eq:4} as illustrated in Figure~\ref{fig:5a}.  In case
$Y_0=Y_1=\emptyset ^{n-1}$, then $F(X)$~is called a \emph{correlation
function} between ``operators'' at the points~$x_i$.  If in addition there
are no points~$x_i$, then $F(X)$~is a complex number, the \emph{partition
function} of the closed manifold~$X$.
 
This geometric formulation of quantum field theory developed in the 1980s out
of the interaction between mathematicians and physicists centered around
2-dimensional conformal field theory.  Graeme Segal's samizdat manuscript
\emph{The definition of conformal field theory}, now published~\cite{Se2},
was widely distributed and very influential among both mathematicians and
physicists.  Segal's recent series of lectures~\cite{Se1} explores and
expands on these ideas in the context of general quantum field theories.
More traditional mathematical treatments of quantum field theory~\cite{SW},
\cite{H}, \cite{GJ} are set in four-dimensional Minkowski spacetime and focus
on analytic aspects.  The geometric formulation set the stage for the advent
of \emph{topological} field theories.  In~1988 Witten~\cite{W1} introduced
twistings of supersymmetric quantum field theories on Minkowski spacetime
which allow them to be formulated on arbitrary oriented Riemannian manifolds.
Special correlation functions in twisted theories are topological invariants.
Witten's first application was to a supersymmetric gauge theory in four
dimensions---a theory whose principal field is a connection on a principal
bundle---where he showed that Donaldson's polynomial invariants of
4-manifolds~\cite{D} are correlation functions in that twisted supersymmetric
gauge theory.  Two-dimensional supersymmetric $\sigma $-models---whose
principal field is a map $\Sigma \to M$ from a 2-manifold into a Riemannian
target manifold---also admit topological twistings in case there is enough
supersymmetry (which constrains the target manifold to be K\"ahler in the
basic case).  These 2-dimensional topological field theories~\cite{W2} have
had profound consequences for algebraic geometry in the form of Gromov-Witten
invariants and mirror symmetry.  By late~1988 Witten realized~\cite{W3} that
the Jones polynomials of knots and links in~$S^3$ are encoded in a
3-dimensional field theory---called \emph{Chern-Simons theory} after the
classical action functional of connections which defines it---and he used it
to introduce new invariants of 3-manifolds.  This theory, as opposed to the
topologically twisted supersymmetric models, is topological at the classical
level and has an immediate connection to combinatorially accessible
invariants.  For many mathematicians it served as an accessible entr\'ee into
quantum field theory.  In early~1989 Atiyah~\cite{A2} introduced a set of
axioms for topological quantum field theory which essentially amount to
Definition~\ref{thm:5}.

   \section{Topological quantum field theory}\label{sec:4}

In this section we flesh out Definition~\ref{thm:5} for simple 1-dimensional
and 2-dimensional theories.  The constructions and theorems give a taste of
what is possible in more complicated and interesting situations.  We include
a rigorous finite version of the Feynman path integral; the nonrigorous
infinite version is one of the main tools in a quantum field theorists'
arsenal.

  \subsection*{1-dimensional theories}\label{subsec:4.1}

Let us begin our exploration of Definition~\ref{thm:5} with a 1-dimensional
topological field theory of oriented manifolds.  Recall that the domain of
such a theory is the bordism category~$\Bord_{\langle 0,1 \rangle}^{SO}$ in
which an object is a compact oriented 0-manifold---a finite set of points,
each with a~`$+$' or~`$-$' attached---and a morphism is an oriented
1-dimensional bordism.  In more detail, if $X\:Y_0\to Y_1$ is an oriented
bordism, then $X$~is a manifold with boundary and so at $x\in \partial X$ we
have a short exact sequence of real vector spaces
  \begin{equation}\label{eq:40}
  0\longrightarrow T_x\partial X\longrightarrow T_xX\longrightarrow
  T_xX/T_x\partial X\longrightarrow 0 
  \end{equation}
The normal bundle carries a canonical orientation: vectors which exponentiate
to curves leaving the manifold are positively oriented.  However, when
interpreted as a bordism we use the arrow of time to orient the normal
bundle.  Namely, outgoing boundary components have the canonical orientation
and incoming boundary components the opposite to the canonical orientation.
Then using \eqref{eq:40}~ an orientation of~$X$ induces one on~$\partial X$,
and we require that the diffeomorphisms $Y_i\to\partial X_i$ in
Definition~\ref{thm:3} preserve the induced orientation.  There is a
time-reversal operation which reverses the arrow of time (swaps incoming and
outgoing), hence the orientation of the normal bundle at the boundary and so
too the induced boundary orientation.

There are two basic objects in~$\Bord_{\langle 0,1 \rangle}^{SO}$: the
$+$~point and the $-$~point.  Any other object is a tensor product (disjoint
union) of these.  Some basic morphisms are illustrated in Figure~\ref{fig:8}.
  \begin{figure}[h]
  \centering
  \includegraphics[scale=.8]{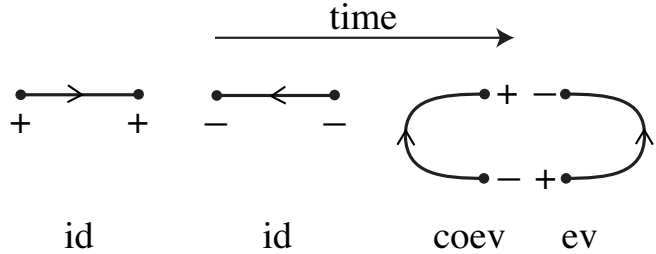}
  \caption{Elementary oriented 1-dimensional bordisms}\label{fig:8}
  \end{figure}
The arrow of time points to the right, whereas the orientation is notated by
an arrow on each component of the bordism.  Notice that there is a
correlation between the orientation, the arrow of time, and the boundary
orientation.  The first two morphisms are identities.  The third is called
\emph{coevaluation} and the fourth \emph{evaluation}.  The second bordism is
obtained from the first by time-reversal, and the same holds for the third
and fourth bordisms.  In this case time reversal is a \emph{duality}
operation: the $-$~point is the dual of the $+$~point and the evaluation is
dual to the coevaluation.  The coevaluation and evaluation are evolutions in
1-dimensional topological field theory which go beyond the standard
evolutions in quantum mechanics (Figure~\ref{fig:6a}).  Also, in quantum
mechanics the closed intervals are Riemannian, so have a length~$\tau $,
whereas in the topological theory all closed intervals are diffeomorphic and
lead to the identity evolution.  Comparison with~\eqref{eq:9} shows that the
Hamiltonian vanishes in a topological field theory.  There is no local
evolution: all of the non-identity behavior comes from topology.
 
Now suppose $F$~is a 1-dimensional oriented topological field
theory~\eqref{eq:2} with values in complex vector spaces:
  \begin{equation}\label{eq:10}
     F\: \bigl(\Bord_{\langle 0,1 \rangle}^{SO},\amalg \bigr)\longrightarrow
     \bigl(\CVect,\otimes \bigr).  
  \end{equation}
The notation recalls that $F$~is a homomorphism of symmetric monoidal
categories, so maps disjoint unions to tensor products.  The homomorphism~$F$
assigns a vector space $F(\pt_+)=V_+$ to the $+$~point and a vector space
$F(\pt_-)=V_-$ to the $-$~point.  This determines the value of~$F$ on all
compact oriented 0-manifolds as they are disjoint unions of $+$~and
$-$~points.  Also, since the empty 0-manifold~$\emptyset ^0$ is the tensor
unit for disjoint union, it maps under the homomorphism~$F$ to the tensor
unit for complex vector spaces under tensor product, which is the complex
line~$\CC$.  Next, consider $F$~evaluated on the bordisms in
Figure~\ref{fig:8}.  As $F$~is a homomorphism it sends identities to
identities, so the first two bordisms map to~$\id_{V_+}$ and~$\id_{V_-}$,
respectively.  The last two bordisms map under~$F$ to linear maps
  \begin{equation}\label{eq:12}
     \begin{matrix}
     &V_+\\c\:\CC\longrightarrow &\!\!\otimes \\&V_- 
      \end{matrix}\qquad \qquad \begin{matrix}
     \quad\;\, V_-\\e\:\otimes&\longrightarrow  &\CC\\
     \quad\;\, V_+
	\end{matrix}
  \end{equation}
where we have written the tensor product vertically to match the figure.
  \begin{figure}[h]
  \centering
  \includegraphics[scale=.8]{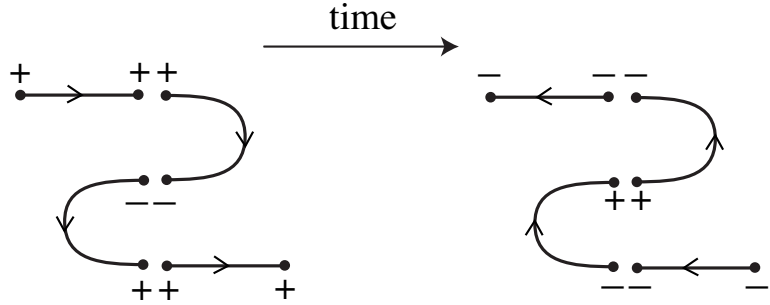}
  \caption{The S-diagrams}\label{fig:9}
  \end{figure}
The sense in which coevaluation and evaluation give rise to duality is
illustrated in Figure~\ref{fig:9}.  The left figure is the composition of two
1-dimensional bordisms, each with two components.  The first maps a single
$+$~point to the tensor product (disjoint union) of 3~points: $+$, $-$, $+$.
The second maps these 3~points back to the $+$~point.  The composition is
computed by gluing at the 3~points in the middle.  The result is
diffeomorphic to the identity map on the $+$~point.  Recall that morphisms in
$\Bord_{\langle 0,1 \rangle}^{SO}$ are 1-dimensional bordisms up to
diffeomorphisms which preserve the boundary identifications.  Comparing the
first composition in Figure~\ref{fig:9} with the first bordism in
Figure~\ref{fig:8} we see that the composition is the identity.  To see the
relation to duality we apply the homomorphism~$F$.  Now the homomorphism
property has two consequences: (1)\ $F$~sends a disjoint union of bordisms to
the tensor product of the corresponding linear maps, and (2)\ $F$~sends a
composition of bordisms to the corresponding composition of linear maps.
Using these rules we see that $F$~sends the compositions in
Figure~\ref{fig:9} to compositions of linear maps
  \begin{equation}\label{eq:11}
     \begin{matrix} V_+&\xrightarrow[\begin{matrix}\scriptstyle\otimes\\
     c\end{matrix}]{\quad \id_{V_+}\quad }&V_+&&\\[-24pt] &&\!\!\otimes&&\\
     &&V_-&&\\  
     &&\!\!\otimes&&\\[-33pt] &&V_+&\xrightarrow{\quad
     \begin{matrix}e\\\scriptstyle\otimes \\ 
     \id_{V_+}\end{matrix}\quad  }&V_+ \end{matrix} \qquad \qquad
     \begin{matrix} V_-&\xrightarrow[\begin{matrix}\scriptstyle\otimes\\
     c\end{matrix}]{\quad \id_{V_-}\quad }&V_-&&\\[-24pt] &&\!\!\otimes&&\\
     &&V_+&&\\ &&\!\!\otimes&&\\[-33pt] &&V_-&\xrightarrow{\quad
     \begin{matrix}e\\\scriptstyle\otimes \\ 
     \id_{V_+}\end{matrix}\quad  }&V_- \end{matrix} 
  \end{equation}
(Note that we have used the symmetry in the first diagram to exchange the
order of the tensor product in the maps~$c,e$ from~\eqref{eq:12}.)

  \begin{lemma}[]\label{thm:6}
 If the compositions~\eqref{eq:11} are identity maps, then $V_+,V_-$~are
finite dimensional vector spaces and $e$~is a nondegenerate duality pairing. 
  \end{lemma}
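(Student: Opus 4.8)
The plan is to extract two explicit linear-algebra identities from the zig-zag equations in~\eqref{eq:11} and read off all three conclusions from them. The single finiteness input is that the coevaluation produces one honest element $c(1)\in V_+\otimes V_-$, and every element of a tensor product of vector spaces is a \emph{finite} sum of simple tensors; so I would write
\begin{equation*}
  c(1)=\sum_{i=1}^{k}a_i\otimes b_i,\qquad a_i\in V_+,\ b_i\in V_-.
\end{equation*}
Everything will be squeezed out of this finite presentation.

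First I would unwind the left-hand composition of~\eqref{eq:11}. Tracing a vector $v\in V_+$ through the map built from $\id_{V_+}$ and $c$, and then through the map built from $e$ and $\id_{V_+}$, the hypothesis that this composite equals $\id_{V_+}$ becomes the identity
\begin{equation*}
  v=\sum_{i=1}^{k}e(b_i\otimes v)\,a_i,\qquad v\in V_+.
\end{equation*}
This exhibits every $v\in V_+$ as a linear combination of $a_1,\dots,a_k$, so $V_+$ is spanned by finitely many vectors and $\dim V_+\le k<\infty$. Unwinding the right-hand composition of~\eqref{eq:11} in exactly the same way produces the mirror identity
\begin{equation*}
  w=\sum_{i=1}^{k}e(w\otimes a_i)\,b_i,\qquad w\in V_-,
\end{equation*}
so $V_-$ is spanned by $b_1,\dots,b_k$ and is likewise finite dimensional.

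Nondegeneracy of the pairing $e\colon V_-\otimes V_+\to\CC$ then falls out of these two formulas with no further work. If $v\in V_+$ pairs to zero against every element of $V_-$, then in particular $e(b_i\otimes v)=0$ for each $i$, and the first identity forces $v=0$; symmetrically, if $w\in V_-$ pairs to zero against all of $V_+$, then $e(w\otimes a_i)=0$ for each $i$, and the second identity forces $w=0$. Hence $e$ has neither a left nor a right kernel, i.e.\ it is a nondegenerate duality pairing. I expect no serious obstacle: the only point demanding care is the bookkeeping of the three tensor factors and the order swap flagged just after~\eqref{eq:11}, which is what pins down that the scalars appearing are $e(b_i\otimes v)$ rather than any other arrangement. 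Once the conventions of Figure~\ref{fig:9} are matched to $c$ and $e$ as written in~\eqref{eq:12}, the two zig-zag relations yield precisely the spanning identities above, and the argument is forced.
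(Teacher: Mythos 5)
Your proposal is correct and is essentially identical to the paper's own proof: both write $c(1)$ as a finite sum of simple tensors, use the two zig-zag identities to exhibit finite spanning sets for $V_+$ and $V_-$, and then read off nondegeneracy of $e$ from those same identities. The only differences are notational (the paper writes $c(1)=\sum v_+^i\otimes v_-^i$ and $e(v_-^i,\xi)$ where you write $a_i\otimes b_i$ and $e(b_i\otimes v)$).
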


  \begin{proof}
 Set $c(1)=\sum\limits_{i=1}^Nv_+^i\otimes v_-^i$ for some $v_{\pm}^i\in
V_{\pm}$ and some positive integer~$N$.  Then the first composition
in~\eqref{eq:11} is the map $\xi \mapsto \sum e(v_-^i,\xi )v_+^i$.  Since
this is the identity map, it follows that $\{v_+^i\}_{i=1}^N$ spans~$V_+$,
whence $V_+$~is finite dimensional.  The same argument with the second
composition proves that $V_-$~is finite dimensional.  If $\xi \in V_+$
satisfies~$e(v_-,\xi )=0$ for all~$v_-\in V_-$, then $\xi =\sum e(v_-^i,\xi
)v_+^i=0$.  Similarly, using the second composition in~\eqref{eq:11} we
deduce that if $\eta \in V_-$ satisfies $e(\eta ,v_+)=0$ for all~$v_+\in
V_+$, then $\eta =0$.  Hence $e$~is a nondegenerate pairing.
  \end{proof}

  \begin{remark}[]\label{thm:8}
 A similar argument for a field theory $F\: \bigl(\Bord_{\langle 0,1
\rangle}^{SO},\amalg \bigr)\longrightarrow \bigl(\Ab,\otimes \bigr)$ with
values in abelian groups proves that $F(\pt_+)$~is finitely generated and
free. 
  \end{remark}

Lemma~\ref{thm:6} illustrates an important \emph{finiteness} principle in
topological field theories: the vector space attached to an $(n-1)$-manifold
in an $n$-dimensional topological field theory with values in~$\CVect$ is
finite dimensional.  We derived this finiteness from duality: the $+$~point
and $-$~point are duals, and that duality is expressed as the existence of
coevaluation and evaluation maps.  Notice that any vector space~$V$ has a
dual space, defined algebraically as the space of linear maps $V\to \CC$,
which comes with a canonical evaluation map.  However, the coevaluation map
exists if and only if $V$~is finite dimensional.  

This notion of finiteness generalizes to any symmetric monoidal category.

  \begin{definition}[]\label{thm:7}
 Let $\sC$ be a symmetric monoidal category and $x\in \sC$.  Then
\emph{duality data} for~$x$ is a triple~$(x',c,e)$ consisting of an
object~$x'\in \sC$, a coevaluation $c\:1\to x\otimes x'$, and an
evaluation~$e\:x'\otimes x\to 1$ such that the compositions 
  \begin{equation}\label{eq:13}
     x\xrightarrow{c\otimes \id_x}x\otimes x'\otimes x\xrightarrow{\id_x\otimes
     e}x \qquad \qquad  
     x'\xrightarrow{\id_{x'}\otimes c}x'\otimes x\otimes x'\xrightarrow{e\otimes
     \id_{x'}}x'
  \end{equation}
are identity maps.  We say \emph{$x$ is dualizable} if there exists duality
data for~$x$.
  \end{definition}

\noindent
 The argument in Lemma~\ref{thm:6} with the S-diagrams in Figure~\ref{fig:9}
applies in any $n$-dimensional field theory---take the Cartesian product of the
S-diagrams with a fixed $(n-1)$-manifold---which shows that objects in the
image of a field theory~$F$ are always dualizable.  In the next section we
define an extension of the notion of a field theory and there is a
corresponding extension of dualizability, which we take up in~\S\ref{sec:6}.
 
At this point we can state and prove a very simple special case of the
cobordism hypothesis. 

  \begin{theorem}[]\label{thm:9}
 Let $V$~be a finite dimensional complex vector space.  Then there is a
homomorphism~$F$ as in~\eqref{eq:10} such that $F(\pp)=V$. 
  \end{theorem}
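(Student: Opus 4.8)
The plan is to construct $F$ explicitly by specifying its values on objects and on the elementary bordisms of Figure~\ref{fig:8}, and then to extend by composition and disjoint union. On objects I would set $F(\pt_+)=V$ and $F(\pt_-)=V^*$, the dual vector space, with $F(\emptyset^0)=\CC$ and the value on a general compact oriented $0$-manifold determined as the tensor product over its points. For the two nontrivial generating morphisms I would take the canonical structure maps of finite-dimensional linear algebra: the coevaluation $c\:\CC\to V\otimes V^*$ sending $1\mapsto\sum_i e_i\otimes e^i$ for a basis $\{e_i\}$ of $V$ with dual basis $\{e^i\}$ (the element $\sum_i e_i\otimes e^i$ is independent of the basis), and the evaluation $e\:V^*\otimes V\to\CC$ given by the canonical pairing $\phi\otimes v\mapsto\phi(v)$. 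The identity bordisms map to identity maps.

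The first thing to verify is that these data satisfy the S-diagram relations~\eqref{eq:11}, i.e.\ that $(V^*,c,e)$ is duality data for $V$ in the sense of Definition~\ref{thm:7}. This is the converse direction of Lemma~\ref{thm:6} and is a routine computation: tracing $\xi\in V$ through the first composition in~\eqref{eq:11} gives $\xi\mapsto\sum_i e^i(\xi)\,e_i=\xi$, and symmetrically the second composition is the identity on $V^*$. Thus the zig-zag identities~\eqref{eq:13} hold precisely because $V$ is finite dimensional, so that the canonical coevaluation exists.

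Next I would extend $F$ to all of $\Bord_{\langle 0,1 \rangle}^{SO}$. Every $1$-dimensional oriented bordism decomposes, via a Morse function as in~\S\ref{sec:2}, into a composition of disjoint unions of the elementary bordisms of Figure~\ref{fig:8}; I then define $F$ on such a bordism to be the corresponding composition of tensor products of the maps $\id$, $c$, $e$. For this to yield a well-defined symmetric monoidal homomorphism, I must check that the resulting linear map is independent of the chosen decomposition.

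The main obstacle is exactly this well-definedness. Two Morse decompositions of the same bordism are connected by a finite sequence of Cerf moves---births and deaths of critical-point pairs, and interchanges of the heights of independent critical points---so it suffices to verify that $F$ is invariant under each such move together with the symmetric monoidal coherence isomorphisms. In dimension one this reduces to a short list: the move that slides a coevaluation past an evaluation is exactly the zig-zag relation~\eqref{eq:11}, which $F$ respects by the second paragraph, while the remaining moves are instances of the functoriality of $\otimes$ and of the symmetry, which hold in $\CVect$. Alternatively one can bypass Cerf theory and use the classification of compact oriented $1$-manifolds directly: every bordism is a disjoint union of oriented intervals and circles, and one checks that $F$ sends a circle to $\tr\id_V=\dim V$ independently of how it is cut, the cutting ambiguity again being governed by~\eqref{eq:11}. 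Granting well-definedness, $F$ is a homomorphism of symmetric monoidal categories with $F(\pp)=F(\pt_+)=V$, as required.
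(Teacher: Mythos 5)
Your proposal is correct and follows essentially the same route as the paper's proof: the same assignments on objects, the same coevaluation (the canonical element $\sum_i e_i\otimes e^i$, i.e.\ $\id_V$ under $\End(V)\cong V\otimes V^*$) and evaluation, with arbitrary bordisms handled by a Morse decomposition into these elementary pieces and well-definedness reduced to Cerf moves, the birth--death move being exactly the S-diagram relation. Your additional remarks (the explicit height-interchange move absorbed by functoriality of $\otimes$, and the alternative check via the classification of compact oriented $1$-manifolds) are useful elaborations but do not change the underlying argument.
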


  \begin{proof}
 If $Y$~is an oriented compact 0-manifold set 
  \begin{equation}\label{eq:14}
     F(Y)= \bigotimes \limits_{y\in Y:y=\pp}\!\!\!\!\! V \quad \otimes\;
     \bigotimes\limits_{y\in Y:y=\pt_-}\!\!\!\!\! V^*. 
  \end{equation}
Referring to the third and fourth bordisms in Figure~\ref{fig:8} define
$F(\coev)$ as the map $\CC\to V\otimes V^*$ which takes~$1\in \CC$ to the
identity map~$\id_V\in \End(V)\cong V^*\otimes V$ and $F(\ev)$ as the duality
pairing $V^*\otimes V\to \CC$.  A Morse function on a 1-dimensional bordism
decomposes it as a composition of the elementary bordisms coev and ev: a
nondegenerate critical point of a real-valued function on a 1-manifold is
either a local maximum or a local minimum.  The only Cerf move
(Figure~\ref{fig:11}) cancels a local maximum against a local minimum, and
the proof that this does not change the value of~$F$ is the statement that
the S-diagrams in Figure~\ref{fig:9} map to the identity.
  \begin{figure}[h]
  \centering
  \includegraphics[scale=.8]{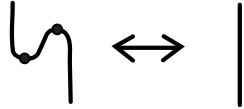}
  \caption{Cerf move in dimension one}\label{fig:11}
  \end{figure}
  \end{proof}

  \subsection*{2-dimensional theories}\label{subsec:4.2}

Next, consider a 2-dimensional oriented topological field theory 
  \begin{equation}\label{eq:15}
     F\: \bigl(\Bord_{\langle 1,2 \rangle}^{SO},\amalg \bigr)\longrightarrow
     \bigl(\CVect,\otimes \bigr). 
  \end{equation}
There is only one compact connected oriented 1-manifold up to diffeomorphism:
a circle has orientation-reversing diffeomorphisms (reflection).   
  \begin{figure}[h]
  \centering
  \includegraphics[scale=.8]{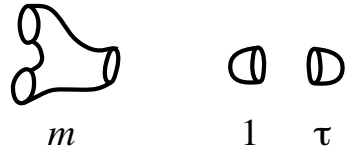}
  \caption{Some elementary oriented 2-dimensional bordisms}\label{fig:12}
  \end{figure}
Let $V=F(\cir)$.  Elementary 2-dimensional bordisms, as depicted in
Figure~\ref{fig:12}, give extra structure on~$V$, namely linear maps
  \begin{equation}\label{eq:16}
 \xymatrix@R-21pt{\txt<-12pc>{$m\:$}&V\otimes
V\ar[r]&V\\\txt<-12pc>{$1\:$}&\CC\ar[r]&V\\\txt<-12pc>{$\tau\:$}&V\ar[r]&\CC} 
  \end{equation}
The multiplication~$m$ gives~$V$ an algebra structure with respect to which
the image of~$1\in \CC$ under the linear map~$1$ is an identity element.  The
linear map~$\tau $ is a trace on~$V$.\footnote{Note that the bordism~ $\tau $
is the time-reversal of~$1$.  There is also a time-reversal of~$m$, which may
be expressed as a composition of the maps in~\eqref{eq:16} together with the
inverse to the nondegenerate bilinear pairing~$\tau \circ m$.}  Standard
arguments with oriented surfaces and their diffeomorphisms prove that $m$~is
associative and commutative and that the trace is nondegenerate in the sense
that the pairing $v_1,v_2\mapsto (\tau \circ m)(v_1,v_2)$ is a nondegenerate
pairing on~$V$.  For example, the composition of the bordisms labeled~$m$
and~$\tau $ in Figure~\ref{fig:12} is the product of the circle with the
bordism labeled~$\ev$ in Figure~\ref{fig:8}; then the argument of
Lemma~\ref{thm:6} with the S-diagram proves that the pairing $\tau \circ m$
is nondegenerate.  Thus an oriented 2-dimensional topological field theory
determines a \emph{commutative Frobenius algebra}, a commutative algebra with
a nondegenerate trace.  The converse is also true.

  \begin{theorem}[]\label{thm:10}
 Let $V$~be a commutative Frobenius algebra.  Then there is a homomorphism 
  \begin{equation}\label{eq:17}
     F\: \bigl(\Bord_{\langle 1,2 \rangle}^{SO},\amalg \bigr)\longrightarrow
     \bigl(\CVect,\otimes \bigr) 
  \end{equation}
with $F(\cir)=V$. 
  \end{theorem}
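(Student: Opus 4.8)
The plan is to build $F$ by specifying its value on objects and on a set of generating bordisms, then to invoke Morse theory to see that every morphism is a composite of these generators, and finally to invoke Cerf theory to reduce well-definedness to a finite list of moves, each of which I will match to a Frobenius-algebra axiom. On objects there is no choice: every compact oriented $1$-manifold is a disjoint union of circles, so once $F(\cir)=V$ is demanded the symmetric monoidal requirement forces $F(\coprod^k\cir):=V^{\otimes k}$ and $F(\emptyset^1):=\CC$. On the elementary bordisms of Figure~\ref{fig:12} I would set the pair of pants to the multiplication $m$, the cup (the disk read as a birth $\emptyset\to\cir$) to the unit $1\:\CC\to V$, and the cap (the disk read as a death $\cir\to\emptyset$) to the trace $\tau\:V\to\CC$; the cylinder goes to $\id_V$ and the twisted cylinder to the symmetry of $V\otimes V$. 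The time-reversed pair of pants is sent to the comultiplication $\Delta\:V\to V\otimes V$ assembled from $m$ and the inverse of the nondegenerate pairing $\tau\circ m$ of Lemma~\ref{thm:6}; nondegeneracy is exactly what makes $\Delta$ well-defined.

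Any $2$-dimensional bordism admits a Morse function (Sard's theorem), which decomposes it into elementary pieces, one handle per critical point: index-$0$ points give cups, index-$1$ points give saddles (pairs of pants), and index-$2$ points give caps. Hence every morphism is a composite of tensor products of the generators above, so the rules that $F$ sends disjoint unions to tensor products and compositions to compositions determine $F(X)$ from any handle decomposition of $X$. The essential remaining point is well-definedness: the answer must not depend on the chosen Morse function. As recalled at the end of~\S\ref{sec:2}, two Morse functions on a fixed bordism are joined by a generic path in the connected space of functions from Cerf theory, along which the decomposition changes only by (a) birth--death of a cancelling critical pair, (b) handle slides, and (c) interchange of the heights of independent critical points. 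Translating each move into an identity between composites of generators produces a finite list of relations to be verified.

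The main obstacle is precisely this last verification---extracting the generating relations of $\Bord_{\langle 1,2 \rangle}^{SO}$ and matching each to a property of the commutative Frobenius algebra $V$. Associativity and commutativity of $m$, the unit law for $1$, and the nondegeneracy of $\tau\circ m$ account for most of them: the cancellation of a cup against an adjacent saddle is the unit law, the cancellation of a saddle against a cap is its dual (as in the S-diagram computation of Figure~\ref{fig:9} and Lemma~\ref{thm:6}), and the interchange moves are handled by the symmetry. The one genuinely two-dimensional relation is the Frobenius relation $(m\otimes\id)\circ(\id\otimes\Delta)=\Delta\circ m=(\id\otimes m)\circ(\Delta\otimes\id)$, encoding the two ways to cut a genus-one-with-boundary piece; it follows from associativity together with the definition of $\Delta$. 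The last input, that attaching a handle acts as the endomorphism $m\circ\Delta$, i.e.\ as multiplication by the handle element $H=m\bigl(\Delta(1)\bigr)$, is again forced by the Frobenius relation. Once every Cerf move is accounted for, $F$ is a well-defined homomorphism of symmetric monoidal categories with $F(\cir)=V$.
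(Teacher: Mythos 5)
Your proposal is correct and takes essentially the same route as the paper: extend $F$ to closed oriented $1$-manifolds by tensor products, use the Frobenius data $(m,1,\tau)$ (with the comultiplication built from $m$ and the inverse of the nondegenerate pairing $\tau\circ m$) to define $F$ on the elementary bordisms attached to Morse critical points of index $0$, $1$, $2$, and then check independence of the chosen Morse function via Cerf moves matched to the Frobenius-algebra axioms. The paper states exactly this outline and defers the move-by-move verification to the appendix of~\cite{MS}; your proposal simply spells out the same verification in more detail.
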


\noindent
 This is one of the oldest theorems in the subject.  In the physics
literature the statement dates at least to Dijkgraaf's thesis~\cite{Di}.
There are several proofs in the mathematics literature, for example in
\cite{Ab,Ko}.  The appendix to~\cite{MS} contains a proof of
Theorem~\ref{thm:10} as well as several important variations.  As in the
proof of Theorem~\ref{thm:9} we first extend~$F$ to all closed oriented
1-manifolds via tensor products.  The data~\eqref{eq:16} which defines the
Frobenius structure on~$V$ tells what to attach to elementary 2-dimensional
bordisms arising from critical points of a Morse function of index 1,0,2.  It
remains to verify that different Morse functions lead to the same linear map.
That check, for which we refer to the reader to~\cite{MS}, uses the basic
properties of a commutative Frobenius algebra.
 
These explicit arguments with Morse functions quickly become tedious and
difficult to execute.  The situation simplifies for \emph{extended} field
theories~(\S\ref{sec:5}) which are more local.  They are the province of the
cobordism hypothesis.  The cobordism hypothesis is proved using on the one
hand more powerful results about spaces of Morse functions and on the other
more sophisticated algebra to organize the argument.
 
One example of a commutative Frobenius algebra is the cohomology algebra
$H^{\bullet }(M;\CC)$ of a compact oriented $n$-manifold~$M$.  The trace is
pairing with the fundamental class ~$[M]\in H_{n}(M)$.  If there is odd
cohomology, then it is commutative in the graded sense because of signs in
the commutation rule for cup products.  For example, if $M=S^2$ then we
obtain the truncated polynomial algebra $\CC[x]/(x^2)$.  The corresponding
2-dimensional topological field theory plays a role in the construction of
Khovanov homology for links~\cite{Kh,B-N}.  If the Frobenius algebra $V$~is
\emph{semisimple}, then we can simultaneously diagonalize the multiplication
operators $M_a(b)=ab,\;a,b\in V$ and so find a basis of commuting idempotents
$e_1,e_2,\dots ,e_n\in V$: thus $e_ie_i=e_i$ and $e_ie_j=0$~if $i\not= j$.
The Frobenius algebra is determined up to isomorphism by nonzero complex
numbers~$\lambda _1,\lambda _2,\dots ,\lambda _n$ defined by~$\tau
(e_i)=\lambda _i$.  In this case everything in the field theory~$F$ with
$F(\cir)=V$ is easily computed in terms of the basis~$\{e_i\}$ and the
numbers~$\lambda _i$.  For example the 2-holed torus in Figure~\ref{fig:16}
maps to the endomorphism $e_i\mapsto \lambda _i\inv e_i$ of~$V$ and a closed
surface~$X_g$ of genus~$g$ maps to the complex number
  \begin{equation}\label{eq:18}
     F(X_g)=\sum \lambda _i^{1-g} .
  \end{equation}
These computations are made by chopping the surfaces into the elementary
bordisms in Figure~\ref{fig:12} and their time-reversals.
  \begin{figure}[h]
  \centering
  \includegraphics[scale=.8]{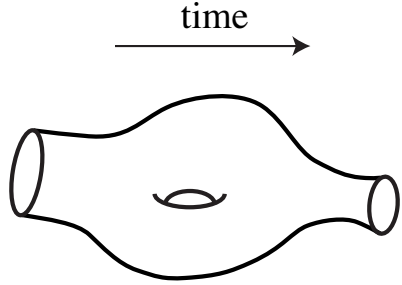}
  \caption{Torus with incoming and outgoing boundary circles}\label{fig:16}
  \end{figure}

Let $G$~be a finite group and $A=\Map(G,\CC)$ the vector space of
complex-valued functions on~$G$.  Then $A$~is an associative algebra under
the convolution product 
  \begin{equation}\label{eq:19}
     (f_1*f_2)(g) = \sum\limits_{g_1g_2=g}f_1(g_1)f_2(g_2),\qquad
     g,g_1,g_2\in G,\quad f_1,f_2\:G\to\CC. 
  \end{equation}
We also define the trace 
  \begin{equation}\label{eq:20}
     \tau (f)=\frac{f(e)}{\#G} ,
  \end{equation}
where $e\in G$ is the identity element.  The product is not commutative if
$G$~is not abelian.  Let $V$~be the center of~$A$, the space of \emph{class
functions} on~$G$; it is a commutative Frobenius algebra which can be
identified with the complexification $R(G)\otimes \CC$ of the representation
ring of~$G$.  Let $F_G$~denote the 2-dimensional oriented topological field
theory with $F_G(\cir)=V$ guaranteed by Theorem~\ref{thm:10}.  The
complexified representation ring is semisimple.  Classical orthogonality
formulas of Schur show that the characters~$\chi _i$ of the irreducible
complex representations of~$G$ are, up to scale, the commuting idempotents
$e_i=\bigl(\chi _i(1)/\#G \bigr)\,\chi _i$.  Then we easily compute that
$\lambda _i=\sum \chi _i(1)^2/\#G$ and from~\eqref{eq:18} the partition
function of a closed connected oriented surface is
  \begin{equation}\label{eq:21}
     F_G(X) = \sum\limits_{\substack{\chi \textnormal{
     irreducible}\\\textnormal{character of $G$}}}
     \left(\frac{\chi (e)}{\#G}\right) ^{\Euler(X)}, 
  \end{equation}
where $\Euler(X)$~is the Euler characteristic of~$X$.

The construction of~$F_G$ which relies on Theorem~\ref{thm:10} takes as input
the complexified representation ring and uses Morse theory to produce a
topological field theory.  There is also a direct geometric construction of
this simple finite theory.  For any manifold~$M$ let $\fld M$ denote the
collection of principal $G$-bundles $P\to M$.  So $P$~is a manifold with a
free right $G$-action and quotient~$M$.  In other terms $P\to M$ is a
covering space which is regular (Galois), but note that $P$~need not be
connected.  For example, if $M=\cir$ and $G=\zmod n$ for some positive
integer~$n$, then there are $n$~distinct isomorphism classes of principal
$G$-bundles over~$M$; the connectivity of the total space of a cover depends
on the prime factorization of~$n$.  For any manifold $\fld M$ is a category:
a morphism $(P'\to M)\longrightarrow (P\to M)$ is a smooth map $\varphi
\:P'\to P$ which commutes with the $G$-action and covers the identity map
of~$M$.  This category is a \emph{groupoid} since all morphisms are
invertible.  For $M=\pt$ there is only one $G$-bundle up to isomorphism, the
trivial bundle $P=G$ with $G$~acting by right multiplication, and the group
of automorphisms is $G$~acting by left multiplication on~$P$.
Figure~\ref{fig:13} depicts a groupoid equivalent to~$\fld{\pt}$.
  \begin{figure}[h]
  \centering
  \includegraphics[scale=.8]{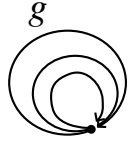}
  \caption{$G$-bundles over $\pt$}\label{fig:13}
  \end{figure}
There is a single object, the set of arrows is~$G$, and composition of arrows
is given by the group law.  For $M=\cir$ if we introduce a basepoint $p\in P$
on a $G$-bundle $P\to\cir$, then we can compute the holonomy, or monodromy,
around the circle (after choosing an orientation), which is an element
of~$G$.  The bundle with basepoint is rigid: any automorphism which fixes the
basepoint is the identity.  The group~$G$ acts simply transitively on the set
of basepoints over a fixed point of~$\cir$, and it conjugates the holonomy.
In this way we see that $\fld{\cir}$~is equivalent to the groupoid~$G\gpd G$
of $G$~acting on itself by conjugation.  It is depicted in
Figure~\ref{fig:14}.  The set of isomorphism classes~$\pi _0(\fld{\cir})$ is
the set of conjugacy classes in~$G$
  \begin{figure}[h]
  \centering
  \includegraphics[scale=.8,trim=100 0 0 0]{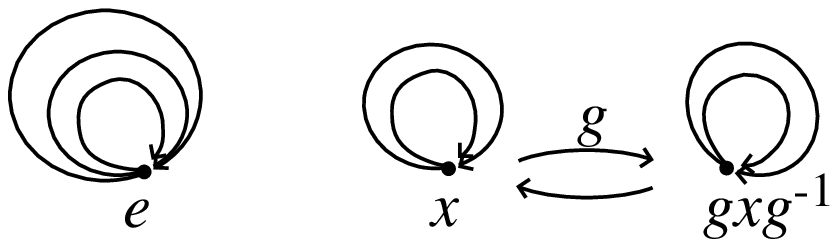}
  \caption{$G$-bundles over~$\cir$}\label{fig:14}
  \end{figure}
and the automorphism group $\pi _1(\fld{\cir},P)$ at a $G$-bundle with
holonomy~$x$ is the centralizer group of~$x$ in~$G$. 
 
Principal $G$-bundles are local and contravariant.  Consider a bordism, as in
Figure~\ref{fig:1} with the arrow of time pointing to the right.  The
inclusions of the incoming and outgoing boundary induce restriction maps of
bundles, which are homomorphisms of groupoids:
  \begin{equation}\label{eq:23}
     \xymatrix@!C{&\fld{X}\ar[dl]_s \ar[dr]^t \\ \fld{Y_0} &&
     \fld{Y_1}} 
  \end{equation}
A diagram of the form~\eqref{eq:23} is a \emph{correspondence}, which is a
generalization of a homomorphism from~$\fld{Y_0}$ to~$\fld{Y_1}$.  Namely, if
$s$~is invertible, then $s\times t$~embeds $\fld X$ into~$\fld{Y_0}\times
\fld{Y_1}$ as the graph of~$t\circ s\inv $.  A composition of bordisms
(Figure~\ref{fig:3}) induces a composition of correspondences
  \begin{equation}\label{eq:24}
     \xymatrix@!C{&&\fld{X''\circ X'}\ar[dl]_{r'} \ar[dr]^{r''} \\
     &\fld{X'}\ar[dl]_{s'} \ar[dr]^{t'}&&\fld{X''}\ar[dl]_{s''} \ar[dr]^{t''} \\
     \fld{Y_0} && \fld{Y_1} && \fld{Y_2}} 
  \end{equation}
The locality of principal $G$-bundles is hidden in this statement: the
groupoid~$\fld{X''\circ X'}$ of $G$-bundles on the composition~$X''\circ X'$
is the fiber product of~$t'$ and~$s''$; that is, a $G$-bundle $P\to X''\circ
X'$ is a triple $(P',P'',\theta )$ consisting of $G$-bundles $P'\to X'$,
$P''\to X''$, and an isomorphism $\theta \:P'\res{Y_1}\to P''\res{Y_1}$ of
their restrictions to~$Y_1$.
 
Correspondence diagrams can often be ``linearized'' into honest maps.  For
the field theory~$F_G$ we use closed oriented 1-manifolds~$Y$ and compact
oriented 2-dimensional bordisms~$X$.  On 1-manifolds we define 
  \begin{equation}\label{eq:25}
     F_G(Y) = \Hom\bigl(\fld Y,\CC \bigr). 
  \end{equation}
Here we view~$\CC$ as a groupoid with only identity morphisms.  Then
homomorphisms $\fld Y\to\CC$ assign complex numbers to objects in~$\fld Y$ so
that the numbers at each end of a morphism are equal.  In other words,
$\Hom(\fld Y,\CC)$~is the vector space of invariant functions on~$\fld Y$, so
can be identified with $\Map\bigl(\pi _0(\fld Y),\CC \bigr)$, the space of
functions on equivalence classes of $G$-bundles.  Then to a
correspondence~\eqref{eq:23} we define
  \begin{equation}\label{eq:26}
     F_G(X) = t_*\circ s^*\:F_G(Y_0)\longrightarrow F_G(Y_1) 
  \end{equation}
as pullback followed by pushforward.  The fibers of~$t$ are (equivalent to)
groupoids with finitely many objects, each with a finite stabilizer group.
The pushforward~$t_*$ of a function~$\phi $ on~$\fld X$ is the sum
  \begin{equation}\label{eq:27}
     t_*(\phi )(y) = \sum\limits_{x}\frac{\phi (x)}{\#\Aut(x)},\qquad y\in
     \fld{Y_1}, 
  \end{equation}
over the equivalence classes~$x$ in the fiber~$t\inv (y)$ of the value
of~$\phi $ divided by the order of the automorphism group.  (This formula
makes clear that $F_G$~may be defined on rational vector spaces.)  Key point:
the fact that \eqref{eq:24}~is a fiber product implies that the push-pull
construction takes compositions of bordisms to compositions of linear maps.
In other words, there is an \emph{a priori} proof that the push-pull
construction produces a homomorphism~$F_G\:\Bord_{\langle 1,2
\rangle}^{SO}\to\CVect$ of symmetric monoidal categories.  The enterprising
reader can now \emph{compute} that $F_G(\cir)$~is the vector space of
central functions on~$G$, and that the basic bordisms in Figure~\ref{fig:12}
map to the convolution product, the character of the identity representation,
and the trace~\eqref{eq:20}. 

Now suppose $X$~is a closed oriented 2-manifold.  It is interpreted as a
bordism $X\:\emptyset ^1\to\emptyset ^1$.  In grand Bourbaki style the
groupoid of $G$-bundles~$\fld{\emptyset ^1}$ has a single object with only
the identity morphism.  (After all, $\mathscr{F}$~maps disjoint unions to
Cartesian products, and $\emptyset ^1$~is the tensor unit for disjoint
union.)  In this case \eqref{eq:26}~specializes to the sum of the constant
function~1 over~$\fld X$: it counts (with automorphisms) the $G$-bundles
over~$X$.  If $X$~is connected then that count of bundles is
  \begin{equation}\label{eq:28}
     F_G(X) = \frac{\#\Hom\bigl(\pi _1(X,x),G\bigr)}{\#G};
  \end{equation}
the numerator counts $G$-bundles with a basepoint over~$x$ and the group~$G$
acts simply transitively on the basepoints.   

  \begin{theorem}[]\label{thm:12}
 Let $X$~be a compact oriented connected 2-manifold and $G$~a finite group.
Then 
  \begin{equation}\label{eq:29}
     \#\Hom\bigl(\pi _1(X,x),G\bigr) = (\#G)\sum\limits_{\substack{\chi
     \textnormal{ irreducible}\\\textnormal{character of $G$}}}
     \left(\frac{\chi (1)}{\#G}\right) ^{\Euler(X)},
  \end{equation} 
where $\Euler(X)$~is the Euler characteristic of~$X$.
  \end{theorem}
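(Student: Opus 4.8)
The plan is to read both sides of \eqref{eq:29} as the single number $F_G(X)$, the value of the finite gauge theory $F_G$ of \eqref{eq:25}--\eqref{eq:27} on the closed surface $X$, computed first geometrically and then representation-theoretically, and to equate the two expressions. Recall from the discussion around \eqref{eq:24} that the fiber-product description of $\fld{X''\circ X'}$ makes the push-pull rule respect composition, so $F_G\:\Bord_{\langle 1,2\rangle}^{SO}\to\CVect$ is a bona fide $2$-dimensional oriented topological field theory. Viewing the closed connected surface $X$ as a bordism $\emptyset^1\to\emptyset^1$, formula \eqref{eq:28} gives at once the geometric value $F_G(X)=\#\Hom\bigl(\pi_1(X,x),G\bigr)/\#G$.

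Next I would pin down the Frobenius algebra $V=F_G(\cir)$. Using the equivalence of $\fld{\cir}$ with the conjugation groupoid $G\gpd G$ (Figure~\ref{fig:14}), one computes from \eqref{eq:25} that $V$ is the space of class functions on $G$, i.e.\ the complexified representation ring $R(G)\otimes\CC$, and that the elementary bordisms of Figure~\ref{fig:12} map under \eqref{eq:26}--\eqref{eq:27} to the convolution product \eqref{eq:19}, the character of the trivial representation (the unit), and the trace \eqref{eq:20}. This is the computation flagged as accessible to the enterprising reader, and it is the technical crux: it requires analyzing the correspondence \eqref{eq:23} for the pair of pants and for the two disks (the unit and trace bordisms) and evaluating the resulting pullbacks and pushforwards of groupoids. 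Once the Frobenius structure is identified, $V$ is semisimple, and Schur orthogonality produces the commuting idempotents $e_i=\bigl(\chi_i(1)/\#G\bigr)\chi_i$ indexed by the irreducible characters, with $\lambda_i=\tau(e_i)=\chi_i(1)^2/(\#G)^2$.

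Finally I would feed these numbers into the semisimple partition-function computation. Chopping $X$ into the elementary bordisms of Figure~\ref{fig:12} and their time-reversals and summing over the idempotent basis yields \eqref{eq:18}, hence the algebraic value $F_G(X)=\sum_\chi \bigl(\chi(1)/\#G\bigr)^{\Euler(X)}$ of \eqref{eq:21}, using $\Euler(X)=2-2g$ and $\chi(e)=\chi(1)$. Equating this with the geometric value $\#\Hom\bigl(\pi_1(X,x),G\bigr)/\#G$ and clearing the denominator $\#G$ gives exactly \eqref{eq:29}. The main obstacle is the middle step---matching the geometric push-pull operations to the algebraic convolution, unit, and trace on class functions---after which the identity drops out formally by comparing two evaluations of one topological field theory.
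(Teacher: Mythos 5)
Your proposal is correct and takes essentially the same route as the paper: the paper proves Theorem~\ref{thm:12} precisely by comparing the bundle-counting formula \eqref{eq:28} with the semisimple Frobenius-algebra evaluation \eqref{eq:21}, which are exactly the two computations of $F_G(X)$ that you equate. As a minor bonus, your value $\lambda_i=\tau(e_i)=\chi_i(1)^2/(\#G)^2$ is the correct one needed to pass from \eqref{eq:18} to \eqref{eq:21} (the paper's inline formula for $\lambda_i$ contains a typo), and the remaining steps match the paper's discussion in \S\ref{sec:4}.
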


\noindent
 The theorem follows immediately by comparing~\eqref{eq:28}
and~\eqref{eq:21}.  It was known to Frobenius and Schur from the character
theory of finite groups, with no quantum fields in sight.  The proof given
here is representative of how topological field theory is used in more
complicated situations.  The invariant on the left hand side
of~\eqref{eq:29}, initially defined for closed 2-manifolds, is extended to an
invariant for compact 2-manifolds with boundary which obeys a gluing law.  So
it is computed by chopping~$X$ into elementary pieces (as in
Figure~\ref{fig:12} together with the time-reversal of~$m$).

  \begin{remark}[]\label{thm:30}
 The appearance of the Euler characteristic in~\eqref{eq:29} suggests an
extension of~$F_G$ which includes 0-manifolds.  They would appear as corners
of 2-manifolds and boundaries of 1-manifolds.  Then in a triangulation
of~$X$, the count of vertices, edges, and triangles in the triangulation
should combine to give the Euler characteristic~$\Euler(X)$ and a new proof
of \eqref{eq:29}.  In such an \emph{extended} field theory we have more
locality, so more decompositions and hence more computational flexibility.
We take up extended theories in~\S\ref{sec:5} and pursue this idea in
Example~\ref{thm:18}.
  \end{remark}

  \begin{remark}[]\label{thm:11}
 There is a variation on~\eqref{eq:26} in which $\fld X$ in~\eqref{eq:23}
carries an integral kernel.  In that case the pull-push formula~\eqref{eq:26}
is modified to pull-multiply-push.  The integral kernel must be local in that
it multiplies in the fiber product~\eqref{eq:24}.  In this 2-dimensional
theory we can obtain such an integral kernel by starting with a cocycle for a
class in the group cohomology~$H^2(G;\CC/\ZZ)$.
  \end{remark}

The theory~$F_G$ was introduced by Dijkgraaf and Witten~\cite{DW}.
See~\cite{FQ},\cite{F} for more details about defining~$F_G$ by counting
principal $G$-bundles.  The lecture notes~\cite{Q} contain elaborations and
many more examples.

The push-pull construction is a finite version of the Feynman functional
integral in quantum field theory.  The groupoid~$\fld M$ consists of
\emph{gauge fields} for a finite group~$G$; if $G$~is a Lie group, then gauge
fields form the groupoid of $G$-connections on~$M$.  The integral kernel
described in Remark~\ref{thm:11} is the exponential of the \emph{classical
action} of the field theory.  The pushforward~$t_*$ is the \emph{Feynman
integral} or \emph{functional integral} or \emph{path integral} over the
space of fields (with fixed boundary condition).  In almost all physically
interesting examples the space, or stack, of fields is not finite, but rather
is infinite dimensional.  One way to define pushforward~$t_*$ on functions is
via integration theory, which of course requires a measure on the space of
fields.  (There are alternatives, at least for some topological theories;
see~\cite{FHT} for one example.)  Furthermore, the measures must be
consistent with the fiber product~\eqref{eq:24} under composition of
bordisms.  Such measures have not been constructed rigorously in most
examples of physical interest.  The example of finite gauge theories, while
it nicely illustrates many topological and algebraic aspects, misses
completely the central analytical issues in quantum field theory.

   \section{$n$-categories and extended topological quantum field
theory}\label{sec:5} 

In this section we extend the definition of an $n$-dimensional topological
field theories in two directions: (i)\ to invariants of manifolds of all
dimensions~$\le n$ and (ii)\ to invariants of families of manifolds.  These
extensions go beyond what was traditionally done in quantum field theory.  

Standard topological field theories, as in Definition~\ref{thm:5}, are
\emph{local} in that invariants of $n$-manifolds are computed by cutting
along closed codimension~1 submanifolds.  We saw after Theorem~\ref{thm:12},
and even in the description of classical Morse theory (Remark~\ref{thm:34}),
that it is desirable to go further and cut along codimension~2 submanifolds
as well, so have $n$-manifolds with corners.  Once we take that plunge we may
as well continue cutting in higher and higher codimension until we are
cutting along 0-manifolds.  In other words, we end up considering
$n$-manifolds with corners of all codimension.  The local model for the
maximal corner is a corner in real affine space: $\{(x^1,x^2,\dots ,x^n)\in
\AA^n: x^i\ge0\}$ near~$(0,0,\dots ,0)$.
 
In a bottom up view, rather than a top down view, we build higher dimensional
manifolds by time evolution of lower dimensional manifolds.  This is
illustrated in Figure~\ref{fig:8} by the time evolution of 0-manifolds to
produce 1-manifolds.  Now we evolve again, introducing a second time as in
Figure~\ref{fig:15}.
  \begin{figure}[h]
  \centering
  \includegraphics[scale=.8]{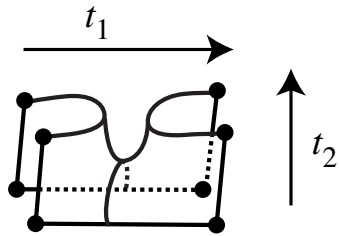}
  \caption{Two-time evolution of two points}\label{fig:15}
  \end{figure}
Let $t_1,t_2\in [0,1]$ denote the times, so the space of times is the
square~$[0,1]\times [0,1]$.  At each of the four corners $t_1,t_2\in \{0,1\}$
lies the 0-manifold~$Y$ consisting of two points.  At time~$t_2=0$ they
evolve in~$t_1$ via the identity bordism, whereas at time~$t_2=1$ they evolve
as the evaluation followed by the coevaluation.  (These 1-dimensional
bordisms are pictured in Figure~\ref{fig:8}).  The evolution in~$t_2$ is a
2-dimensional bordism~$W$ between these two 1-dimensional bordisms~$X_0,X_1$.
As a manifold it is a 2-dimensional manifold with corners, but as a bordism
we remember the time evolutions.  Morally, as in~\S\ref{sec:2} it is only the
arrows of time which matter---and these only near the boundaries and
corners---but it is convenient both heuristically and technically to think in
terms of actual time functions.  An algebraic representation of this two-time
evolution is:
  \begin{equation}\label{eq:30}
     \xymatrix@C+24pt{Y\rtwocell<7>^{X_1 }_{X_0}{^{\qquad\;\;\; W}}& Y}
  \end{equation}
 
The algebraic structure which includes~\eqref{eq:30} is a \emph{2-category}.
In addition to objects~$x,y$ and morphisms $f,g\:x\to y$ mapping between
them, there are now 2-morphisms $\eta \:f\Rightarrow g$ which map between
morphisms.  For clarity `morphisms' are now termed `1-morphisms'.  In the
2-category~$\bordt$ the objects are compact 0-manifolds, the
1-morphisms are 1-dimensional bordisms, and the 2-morphisms are 2-dimensional
bordisms.  A 2-category has two associative composition laws, easily seen
pictorially in~$\bordt$.  Namely, we can compose horizontally in the first
time~$t_1$ or vertically in the second time~$t_2$.  Disjoint union is an
extra algebraic structure---still called a symmetric monoidal structure---and
the empty manifolds are identity elements for disjoint union.  So, for
example, a closed 2-manifold~$W$ is interpreted as a 2-morphism $W\:\emptyset
^1\Rightarrow\emptyset ^1$ in~$\bordt$.  For now we leave unspecified what
sort of extra topological data (orientation, framing, \dots ) we assume
present.

The saddle in Figure~\ref{fig:15} is the elementary bordism in Morse theory
depicted in Figure~\ref{fig:4}.  In other words, it is the
2-manifold~$D^1\times D^1$ which implements the surgery beginning with
$S^0\times D^1$ and ending with $D^1\times S^0$.  Here $D^1$~is the standard
closed 1-ball.  The general surgery 
  \begin{equation}\label{eq:31}
     D^p\times D^q\:S^{p-1}\times D^q\longrightarrow D^p\times S^{q-1}, 
  \end{equation}
can be written algebraically in a diagram similar to~\eqref{eq:30}
with~$Y=S^{p-1}\times S^{q-1}$.  Morse theory tells that a manifold has a
handlebody decomposition into elementary bordisms~\eqref{eq:31}.  We might
conclude that 2-categories go far enough, and that nothing is to be gained by
chopping further.  We could, after all, make a 2-category whose objects are
closed $(n-2)$-manifolds and with 1-morphisms and 2-morphisms their time
evolutions.  But the structure simplifies if we don't stop there and rather
go all the way down to points.
 
Therefore, to study manifolds of dimension~$\le n$, or equivalently to study
topological field theories of dimension~$n$, we are led to the
\emph{$n$-category}~$\bordn$ whose objects are compact 0-manifolds and whose
$k$-morphisms ($1\le k\le n$) are $k$-time evolutions of objects.  There are
$k$~composition laws for $k$-morphisms, and they satisfy various
compatibilities.  Disjoint union gives a symmetric monoidal structure.  It is
a complicated combinatorial problem to track all of this data.  The relevance
of higher categories to topological field theory was understood in the early
1990s, but at that time rigorous foundations were not available.  In the
intervening years several approaches and definitions have been advanced.  We
will not attempt a formal definition here, but refer the reader
to~\cite{BD,L1} for more detailed exposition and references.
 
The $n$-category $\bordn$ is the first extension we envisioned at the
beginning of this section.  The second is to families of manifolds.  It turns
out that this can be encoded by extending the $n$-category~$\bordn$ higher
up: we adjoin $(n+1)$-morphisms, $(n+2)$-morphisms, etc.  Namely, if
$W_0,W_1$~are $n$-dimensional bordisms we define an $(n+1)$-morphism $\varphi
\:W_0\to W_1$ to be a diffeomorphism which preserves all of the ``boundary
data''.  An $n$-morphism is a map between two $(n-1)$-morphisms, each of
which is a map between two $(n-2)$-morphisms, and on down.  The
diffeomorphism~$\varphi $ must preserve the implicit identifications.  In
terms of the $n$-time evolution, $\varphi $~must be compatible with the data
at each of the $2^n$~extreme times~$t_i\in \{0,1\}$.  Since $\varphi $~is a
diffeomorphism, it is invertible.  We continue and define an $(n+2)$-morphism
$\varphi _0\to\varphi _1$ to be an isotopy between the
diffeomorphisms~$\varphi _0$ and~$\varphi _1$, again preserving the boundary
data.  Isotopies are also invertible, up to a higher isotopy.  Continuing in
this way we have $k$-morphisms for all~$k$, so an $\infty $-category.  But it
has the property that every $k$-morphism for~$k>n$ is invertible.

  \begin{definition}[]\label{thm:15}
 Let $n\in \ZZ^{>0}$.  An $(\infty ,n)$-category is an $\infty $-category in
which every $k$-morphism is invertible for~$k>n$. 
  \end{definition}

\noindent
 `Definition' is not really appropriate as we have not defined $\infty
$-categories!  There are complete definitions for $\infn$-categories, in fact
several \cite{Ba,R,Be} with others on the way, and also a study~\cite{BS} of
all homotopy theories of $\infn$-categories.

  \begin{remark}[]\label{thm:19}
 A higher category in which \emph{every} morphism is invertible---i.e., an
$(\infty ,0)$-category---is a combinatorial model for a space.  Since every
morphism is invertible, this is also called an \emph{$\infty $-groupoid.}  So
whereas an $n$-category has \emph{sets} of $n$-morphisms, an $\infn$-category
has \emph{spaces} of $n$-morphisms.  An $n$-category may be extended to a
\emph{discrete} $\infn$-category in which all $k$-morphisms for~$k>n$ are
identity maps.
  \end{remark}

  \begin{definition}[]\label{thm:16}
 $\bord n$ is the $\infn$-category whose objects are compact 0-manifolds,
$k$-morphisms for $1\le k\le n$ are $k$-time evolutions of objects, and
$k$-morphisms for~$k>n$ are $(k-n)$-fold iterated isotopies of
diffeomorphisms.  It is symmetric monoidal under disjoint union.
  \end{definition}

\noindent
 Again this is only a descriptive definition. 
 
The manifolds in $\bord n$ typically carry extra data.  For example, there is
an $\infn$-category $\bordso n$ of oriented bordisms.  There is also a
bordism category of bordisms with tangential \emph{framing}, but in an
unstable\footnote{Framings on manifolds used to define framed bordism
groups---isomorphic by the Pontrjagin-Thom construction to stable homotopy
groups of spheres---are stable framings of the normal bundle.}  sense.
Namely, an \emph{$n$-framing} on a $k$-bordism~$W$ in~$\bordfr n$ is a
trivialization of~$TW\oplus (n-k)$, where $(n-k)$~is the trivial bundle of
the indicated rank.  The $\infn$-category of unoriented manifolds is denoted
$\bordo n$.  We use~ `$\bord n$' generically to denote any of these and many
other similar possibilities.

Analogous to Definition~\ref{thm:5} we consider representations of~$\bord
n$.  We allow an arbitrary codomain. 

  \begin{definition}[]\label{thm:17}
 Let $\sC$~be a symmetric monoidal $\infn$-category.  An \emph{extended
topological field theory with values in~$\sC$} is a homomorphism $F\:\bord
n\to\sC$.  
  \end{definition}

\noindent
 The homomorphism property means that $F$~respects the $n$~composition laws
as well as the symmetric monoidal structures.  The cobordism hypothesis,
which we take up in the next section, determines the space of
homomorphisms~$F$ in terms of~$\sC$.   
 
For the remainder of this section we indicate some examples which illuminate
the idea of an extended field theory and the flexibility of
Definition~\ref{thm:17}.  

  \begin{example}[]\label{thm:18}
 Let $G$~be a finite group.  Recall from~\S\ref{sec:4} the 2-dimensional
topological field theory $F_G\:\Bord_{\langle 1,2 \rangle}^{SO}\to\CVect$.
In~\eqref{eq:19} we introduced the algebra $A=\Map(G,\CC)$ of functions
on~$G$ under convolution, but only its center made an appearance
in~$F_G$---as~$F_G(\cir)$.  There is an extended field theory~$\hF_G$ of 0-,
1-, and 2-manifolds which has~$\hF_G(\pp)=A$.  The codomain $(\infty
,2)$-category~$\sC$ of any extension has the property that the $(\infty
,1)$-category $\Hom_{\sC}(1,1)$ of endomorphisms of the tensor unit~$1$ is
identified with~$\CVect$.  In fact, $\CVect$~is discrete: objects are complex
vector spaces, 1-morphisms are linear maps, and there are no non-identity
higher morphisms.  So we might hope that $\sC$~is also discrete, an ordinary
2-category.  Furthermore, if $\hF_G(\pp)$~is to be~$A$, then objects of~$\sC$
are algebras.  Thus let $\sC=\CAlg$~be the 2-category whose objects are
complex algebras.  If $A_0,A_1\in \CAlg$, then we define a 1-morphism
$B\:A_0\to A_1$ to be an $(A_1,A_0)$-bimodule~$B$, a complex vector space~$B$
with a left action of~$A_1$ and a right action of~$A_0$.  Composition is by
tensor product over algebras: if $B\:A_0\to A_1$ and $B'\:A_1\to A_2$, then
$B'\circ B\:A_0\to A_2$ is the $(A_2,A_0)$-bimodule $B'\otimes _{A_1}B$.  The
symmetric monoidal structure is given by tensor product over~$\CC$.  The
algebra~$\CC$ is the tensor unit~$1$ and $\Hom_{\CAlg}(1,1)$ is the
collection of $(\CC,\CC)$-bimodules, which is canonically~$\CVect$, as
desired.  A 2-morphism between bimodules is a linear map which intertwines
the algebra actions.  To put this construction in context, we observe that an
\emph{isomorphism} in the 2-category~$\CAlg$ of algebras is a \emph{Morita
equivalence} of algebras.
 
We pause to remark that we have climbed to the next categorical level---from
1-categories to 2-categories---by endowing objects in a 1-category with an
associative unital composition law.  Complex vector spaces form a 1-category,
whereas complex vector spaces which are algebras form a 2-category.  This is
an important general idea, which can be implemented at all categorical levels
and also can be iterated.  For example, if we consider complex vector spaces
with 2~composition laws we obtain a 3-category (of commutative algebras).  We
will meet more examples below.  We can embed $\CAlg$ into the more familiar
2-category of $\CC$-linear categories~$\CCat$: an algebra~$A$ maps to the
linear category of left $A$-modules.  It is usually easier to scale
categorical heights by looking at ``algebra objects'' in an existing
category, rather than by introducing new and more elaborate constructs.  
 
Returning to  
  \begin{equation}\label{eq:32}
     \hF_G\:\bordso 2\longrightarrow \CAlg,
  \end{equation}
once we posit $\hF_G(\pp)=A=\Map(G,\CC)$, we can compute~$\hF_G(\cir)$ as
follows.  We know that $\hF_G(\pt_-)$~is the dual to~$\hF_G(\pp)$, since
$\pp$~and $\pt_-$~are dual in~$\bordso 2$, and it turns out that the dual
algebra is the \emph{opposite} algebra~$A^o$.  The coevaluation in
Figure~\ref{fig:8} is the left $(A\otimes A^o)$-module~$A$, and the
evaluation is the right~$(A^o\otimes A)$-module~$A$.  After permuting the two
boundary points of the evaluation, we compose coevaluation and evaluation to
compute 
  \begin{equation}\label{eq:33}
     \hF_G(\cir) = A\,\otimes \mstrut _{\!\!A\otimes A^o}\;A. 
  \end{equation}
This tensor product is the \emph{Hochschild homology} of the algebra~$A$.  We
can easily compute it explicitly.  Tensoring over~$A$ gives the tensor
product~$A\otimes _AA$ of the right~$A$-module~$A$ with the left
$A$-module~$A$, which is canonically~$A$ by multiplication.  Then the
$A^o$-action is by left and right multiplication, so letting $[A,A]\subset A$
denote the subspace spanned by elements of the
form~$a_1a_2-a_2a_1,\;a_1,a_2\in A$, we conclude $\hF_G(\cir)=A/[A,A]$.  This
is \emph{not} the center of~$A$, which is what we expect from the text
after~\eqref{eq:27}.  To identify the vector space~$A/[A,A]$ with the center
of~$A$ we need one more piece of data, a nondegenerate \emph{trace} $\tau
\:A\to\CC$ on~$A$.  Nondegeneracy means that $a_1,a_2\mapsto \tau (a_1a_2)$
is a nondegenerate pairing, and then we identify the quotient~$A/[A,A]$ with
the orthogonal subspace~$[A,A]^\perp\subset A$, which is easily identified
with the center of~$A$.  The pair~$(A,\tau )$ is a \emph{Frobenius algebra}.
For $A=\Map(G,\CC)$ we use the trace~\eqref{eq:20}.
 
The cobordism hypothesis, stated for framed manifolds in
Theorem~\ref{thm:14}, asserts that $\hF_G$~is determined by its value
on~$\pp$.  This is true here, but `value on~$\pp$' must be interpreted as the
pair~$(A,\tau )$.  The extra datum~$\tau $ is necessary as $\hF_G$~is an
oriented theory, not simply a framed theory; see Theorem~\ref{thm:28} and
Example~\ref{thm:29}. 
 
In~\S\ref{sec:4} we described an approach to the non-extended theory~$F_G$
using a finite version of the path integral in physics, which amounts to
counting principal $G$-bundles.  The finite path integral extends to give an
\emph{a priori} construction of~$\hF_G$ in which $\hF_G(\pp)=A$~is the result
of a computation; see~\cite{F,FHLT} for details.
  \end{example}

  \begin{example}[]\label{thm:20}
 Historically, 3-dimensional Chern-Simons theory~\cite{W3} was the example
which most pointed the way towards extended topological field theories.  The
approach of Reshetikhin-Turaev~\cite{RT1,RT2} to the resulting invariants of
3-manifolds and links begins with a quantum group, in the form of a complex
linear category with extra structure, a \emph{modular tensor
category}~\cite{MSei}.  By contrast, Witten begins with the Chern-Simons
functional and uses the path integral.  The relationship between the
approaches, worked out in~\cite{F} for finite gauge groups, is that
Chern-Simons is a (partially) extended theory of 1-, 2-, and~3-manifolds
whose value on~$\cir$ is the modular tensor category.  A complete
construction of this 1-2-3 theory beginning from quantum group data was given
in~\cite{Tu}; see also~\cite{Wa}.  There is current work, for
example~\cite{BDH}, to construct a fully extended 0-1-2-3 theory.
  \end{example}

  \begin{example}[]\label{thm:21}
 The previous two examples are discrete: there are no interesting invariants
for families of manifolds beyond those for single manifolds.  That an
extension of Definition~\ref{thm:5} to families would be fruitful emerged in
the 1990s from 2-dimensional field theories.  Segal promoted the idea of a
cochain-valued topological field theory~\cite{Se3}, and there were several
mathematical works which pointed towards invariants for families of
manifolds; a quirky sample is~\cite{LZ,G,KM,BC}.  The most definitive work in
this direction is by Kevin Costello~\cite{Co}, who constructed a theory of
``open-closed'' topological 2-dimensional field theories in families from
Calabi-Yau categories.  These are closely related to fully extended
2-dimensional theories; see~\cite[\S4.2]{L1}. 
  \end{example}

  \begin{example}[]\label{thm:22}
 Another motivating example for the cobordism hypothesis which includes
invariants for families of manifolds is \emph{string topology}, which defines
invariants of compact manifolds using its loop space and Riemann surfaces.
It was introduced by Chas-Sullivan~\cite{CS}, and there is a large literature
which follows.  See~\cite[\S4.2]{L1} for the relation with the cobordism
hypothesis.  
  \end{example}

   \section{The cobordism hypothesis}\label{sec:6}

Recall from~\S\ref{sec:4} that objects in the image of a non-extended
topological field theory obey a finiteness condition, expressed in
categorical terms by dualizability (Definition~\ref{thm:7}).  There is an
analogous finiteness condition called \emph{adjointability} for
$k$-morphisms, $1\le k\le n-1$, in an extended $n$-dimensional field theory.
We give the definition for 1-morphisms, which specializes to the traditional
notion of \emph{adjoint functors} in category theory~\cite{Ka} for the
2-category of categories.

  \begin{definition}[]\label{thm:23}
 Let $\sC$ be a 2-category; $x,y\in \sC$ objects in~$\sC$; and suppose
$f\:x\to y$, $g\:y\to x$ are 1-morphisms.  Then $f$~is a \emph{left adjoint}
to~$g$ if there exist 2-morphisms $u\:\id_x\Rightarrow g\circ f$ and
$c\:f\circ g\Rightarrow\id_y$ such that the compositions
  \begin{equation}\label{eq:34}
     f = f\circ \id_x \xRightarrow{\id\times u}f\circ g\circ f
     \xRightarrow{c\times \id}\id_y\circ f=f 
  \end{equation}
and 
  \begin{equation}\label{eq:35}
     g = id_x\circ g \xRightarrow{u\times \id}g\circ f\circ g
     \xRightarrow{\id\times c}g\circ \id_y=g
  \end{equation}
are identity 2-morphisms. 
  \end{definition}

\noindent
 We then say that $g$~is a right adjoint to~$f$, and $u$, $c$~are the unit
and counit of an adjunction.  The compositions~\eqref{eq:34}
and~\eqref{eq:35} are the 2-morphism version of the S-diagram
compositions~\eqref{eq:13}.  The corresponding definition for
$\infn$-categories and higher morphisms is similar, but the compositions are
only the identity maps up to higher morphisms, or equivalently are identity
maps in a homotopy category which remembers higher morphisms only up to
equivalence.  \emph{Invertible} maps have adjoints---the inverse \emph{is} an
adjoint---so adjointability is weaker than invertibility.   

  \begin{remark}[]\label{thm:31}
 If an $n$-morphism in an $n$-category, or $\infn$-category, is adjointable
then it is invertible.  This follows since the unit and counit of an
adjunction, which are $(n+1)$-morphisms, are invertible.
  \end{remark}

Let $\sC$~be a symmetric monoidal $\infn$-category and $F\:\bord n\to\sC$ an
extended field theory.  Then just as $F(\pt)$~is dualizable, so too is
$F(W)$~adjointable for every $k$-dimensional bordism~$W$ with $1\le k\le
n-1$.  This is an extended finiteness condition satisfied by an extended
topological field theory.  We extract from~$\sC$ all objects which have
duals, and whose duality data have adjoints, which in turn have adjoints,
etc.

  \begin{lemma}\cite[\S2.3]{L1}\label{thm:24}
 Let $\sC$~be a symmetric monoidal $\infn$-category.  There is an
$\infn$-category~$\sCfd$ and a homomorphism $i\:\sCfd\to\sC$ so that (i)\
every object in~$\sCfd$ is dualizable and every $k$-morphism, $1\le k\le n-1$,
is adjointable, and (ii)\ $i\:\sCfd\to\sC$ is universal with respect to~(i).
  \end{lemma}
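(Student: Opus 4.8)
The plan is to build $\sCfd$ as the maximal sub-$\infn$-category of $\sC$ on which condition~(i) holds, and to derive the universal property~(ii) from a single structural fact: symmetric monoidal homomorphisms \emph{preserve} dualizability and adjointability. First I would record this fact precisely. If $F\:\sD\to\sC$ is a homomorphism of symmetric monoidal $\infn$-categories and $(x',c,e)$ is duality data for an object~$x$ as in Definition~\ref{thm:7}, then $\bigl(F(x'),F(c),F(e)\bigr)$ is duality data for $F(x)$, since $F$ carries the S-diagram compositions~\eqref{eq:13} to the corresponding compositions for $F(x)$ and preserves their being identities (up to coherent higher morphisms). The identical reasoning applied to~\eqref{eq:34} and~\eqref{eq:35} shows that $F$ sends an adjunction to an adjunction, carrying unit and counit to unit and counit. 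Thus $F$ preserves full dualizability in every degree.

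Next I would construct $\sCfd$ by a downward induction on categorical level. Begin with the full sub-$\infn$-category spanned by the dualizable objects of~$\sC$. Among the $1$-morphisms between such objects, retain only those admitting both a left and a right adjoint; among the $2$-morphisms retain only the adjointable ones; and so on, at each level $1\le k\le n-1$ discarding every $k$-morphism that fails to be adjointable. By Remark~\ref{thm:31} no condition is needed at the top level $k=n$, nor for $k>n$, where adjointability is automatic from invertibility. Because adjoints and duals are themselves assembled from data at the same or higher level, this is really a coinductive (largest fixed point) prescription: one must verify that the retained morphisms still compose, that the dual $x'$ and the duality and adjunction data of a retained object again lie in the retained structure, and hence that the procedure stabilizes to a genuine sub-$\infn$-category $\sCfd$ satisfying~(i). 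I would then check closure under the monoidal structure: the unit~$1$ is self-dual with identity duality data, and if $x,y$ are fully dualizable then so is $x\otimes y$, with dual $x'\otimes y'$ and with duality and adjunction data obtained by tensoring; likewise adjointability is stable under~$\otimes$. This makes $\sCfd$ symmetric monoidal and the inclusion $i\:\sCfd\to\sC$ a symmetric monoidal homomorphism.

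For~(ii), let $\sD$ be any symmetric monoidal $\infn$-category whose objects are dualizable and whose $k$-morphisms are adjointable for $1\le k\le n-1$, and let $j\:\sD\to\sC$ be a homomorphism. By the preservation fact of the first paragraph, every cell in the image of~$j$ is fully dualizable, so $j$ factors through the sub-$\infn$-category $\sCfd$. Because $i\:\sCfd\to\sC$ is a monomorphism of $\infn$-categories (the inclusion of a sub-structure cut out by conditions on cells), the space of lifts $\tilde\jmath$ with $i\circ\tilde\jmath\simeq j$ is contractible, giving the essentially unique factorization required by universality. The main obstacle is the second step: making the coinductive definition of $\sCfd$ rigorous in a homotopy-coherent setting. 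Because duality data and adjoints are determined only up to a contractible space of choices, ``adjointable'' must be read as a property detectable in a suitable homotopy category at each level, and one must argue that imposing these properties degree by degree yields a well-defined $\infn$-category independent of choices. This is where the foundational machinery of $\infn$-categories is indispensable, and it is the part that resists a purely formal argument.
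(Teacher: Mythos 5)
The paper does not prove this lemma; it is cited from Lurie \cite[\S2.3]{L1}, so there is no internal proof to compare against. Your proposal reconstructs, in substance, the construction of that reference: $\sCfd$ is the maximal sub-$\infn$-category of $\sC$ satisfying the dualizability and adjointability conditions, with universality following from the two facts you isolate — symmetric monoidal homomorphisms preserve duality data and adjunction data, and factorizations through a monomorphism are unique up to contractible choice. You also correctly identify the key subtlety: a literal level-by-level pruning fails, because the unit and counit witnessing adjointability of a retained $k$-morphism are $(k+1)$-morphisms that must themselves be retained, so the construction has to be a greatest fixed point rather than a sequential one; the points you leave open (a rigorous formulation of that fixed point for homotopy-coherent cells, and closure of the retained cells under composition and $\otimes$) are precisely where the technical content of \cite[\S2.3]{L1} resides, so within the expository standard of this paper your outline is sound.
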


\noindent
 Here `fd' stands either for `fully dualizable' or `finite dimensional'.  An
$\infn$-category which satisfies~(i) is said to ``have duals'', as in the
statement of Theorem~\ref{thm:13}.  The finiteness condition on a topological
field theory $F\:\bord n\to\sC$ may be summarized by the diagram
  \begin{equation}\label{eq:36}
     \xymatrix@R-15pt{&\sCfd\ar[dd]^i\\ \bord n\ar@{-->}[ur]\ar[dr]^F\\&\sC} 
  \end{equation}
In other words, $F$~factors through~$\sCfd$.

Extended topological field theories $F\:\bord n\to\sC$ are the objects of an
$\infn$-category we denote $\Hom(\bord n,\sC)$.  A 1-morphism $\eta \:F_0\to
F_1$ between two homomorphisms assigns a $(k+1)$-dimensional morphism~$\eta
(W)\:F_0(W)\to F_1(W)$ to each $k$-dimensional bordism~$W$.  The fact that
adjointable $n$-morphisms are invertible (Remark~\ref{thm:31}) implies, after
some argument, that any 1-morphism~$\eta $ is in fact an isomorphism.  The
same applies to higher morphisms.  It follows that $\Hom(\bord n,\sC)$ is in
fact an $(\infty,0)$-category---all morphisms are invertible---so according
to Remark~\ref{thm:19} can be viewed as a \emph{space}.  In other words, the
collection of extended topological field theories with values in~$\sC$ is a
space.
 
The cobordism hypothesis identifies the space~$\Hom(\bord n,\sC)$ with a
space constructed directly from~$\sC$ by combining Lemma~\ref{thm:24} with
another universal construction.

  \begin{lemma}\cite[\S2.4]{L1}\label{thm:25}
 Let $\sD$ be an $\infn$-category.  There is an $\infty$-groupoid~$\sDt$
and a homomorphism $j\:\sDt\to\sD$ so that (i)\ every $k$-morphism, $k>0$,
in~$\sDt$ is invertible, and (ii)\ $j\:\sDt\to\sD$ is universal with respect
to~(i).
  \end{lemma}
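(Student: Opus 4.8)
The plan is to build $\sDt$ as the \emph{maximal sub-$\infty$-groupoid} (the \emph{core}) of $\sD$, by discarding all non-invertible higher morphisms. Concretely, I would keep every object of $\sD$ and, for each $k\ge1$, retain only those $k$-morphisms that are invertible in the $\infn$-categorical sense (invertible up to invertible higher morphisms). Let $j\:\sDt\to\sD$ be the evident inclusion. Since a composite of invertible morphisms is again invertible, this subcollection is closed under all $k$ composition laws, so $\sDt$ is again an $\infn$-category; and since by construction every $k$-morphism of $\sDt$ with $k>0$ is invertible, $\sDt$ is in fact an $(\infty,0)$-category, i.e.\ an $\infty$-groupoid (Remark~\ref{thm:19}). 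This gives~(i).

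For the universal property~(ii), let $\mathcal{E}$ be any $\infty$-groupoid and $g\:\mathcal{E}\to\sD$ a homomorphism. Every $k$-morphism of $\mathcal{E}$ is invertible, and a homomorphism carries invertible morphisms to invertible morphisms (an inverse maps to an inverse); hence the image of $g$ consists entirely of invertible cells of $\sD$, so $g$ factors through $j$. The factorization is unique up to contractible choice because $j$ is a monomorphism: at every level $\sDt$ is included as the sub-object of invertible cells, so any two lifts agree. Packaged adjointly, this says exactly that $j$ is the counit exhibiting $\sDt$ as the value at $\sD$ of the right adjoint to the inclusion of $\infty$-groupoids into $\infn$-categories; equivalently, $\sDt$ corepresents homomorphisms out of groupoids into $\sD$. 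The construction is structurally identical to that of $\sCfd$ in Lemma~\ref{thm:24}: in both cases one carves out a universal sub-$\infn$-category by imposing a condition on morphisms---there adjointability, here invertibility---that is preserved under composition and under homomorphisms, so that the universal property becomes formal.

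The step I expect to carry the real content is showing that the invertible cells genuinely assemble into a sub-$\infn$-category, i.e.\ that the core is well defined, functorial, and independent of the chosen model. As the text concedes after Definition~\ref{thm:15}, $\infty$-categories have not been defined here, so this must be executed inside one of the models~\cite{Ba,R,Be}. In each, the verification is the higher analogue of forming the maximal Kan subcomplex of a quasicategory: one checks that restricting to invertible pieces at every level is compatible with the Segal and completeness conditions, and that the resulting functor agrees across models under the comparisons of~\cite{BS}. By contrast, once that is in place the universal property itself---and the identification of $\sDt$ with the underlying space of $\sD$, matching Remark~\ref{thm:19}---is essentially formal, exactly as in the presentability argument where existence of the right adjoint to a colimit-preserving inclusion of presentable $\infty$-categories follows from the adjoint functor theorem.
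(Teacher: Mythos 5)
Your construction is exactly the paper's: after stating the lemma, the paper simply remarks that $\sDt$ ``may be constructed from~$\sD$ by removing all noninvertible morphisms,'' i.e.\ by passing to the core, which is precisely your maximal sub-$\infty$-groupoid, and your verification of the universal property fills in the formal argument the paper (deferring to~\cite[\S2.4]{L1}) leaves implicit. Your closing caveat---that the genuine content lies in making the core construction rigorous in a chosen model of $\infn$-categories---is also consistent with the paper's expository stance.
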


\noindent
 The $\infty $-groupoid~ $\sDt$, which is an $\infty $-category in which
every morphism is invertible, may be constructed from~$\sD$ by removing all
noninvertible morphisms.
 
Finally, we can state a precise version of the cobordism hypothesis, first
for $n$-framed manifolds. 

  \begin{theorem}[Cobordism hypothesis: framed version]\label{thm:26}
 Let $\sC$~be a symmetric monoidal $\infn$-category.  Then the map 
  \begin{equation}\label{eq:37}
     \begin{aligned} \Hom(\bordfr n,\sC)&\longrightarrow \sCfdt \\
      F&\longmapsto F(\pp)\end{aligned} 
  \end{equation}
is a homotopy equivalence of spaces. 
  \end{theorem}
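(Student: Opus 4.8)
The plan is to derive the stated homotopy equivalence formally from the freeness assertion of Theorem~\ref{thm:13}, concentrating all the geometric content there. First I would note that any homomorphism $F\:\bordfr n\to\sC$ automatically lands among the fully dualizable objects: the generator $\pp$ is fully dualizable in $\bordfr n$—its duality and adjunction data are the elementary bordisms of Figures~\ref{fig:8} and~\ref{fig:15}—and homomorphisms of symmetric monoidal $\infn$-categories preserve duals and adjoints. By the universal property in Lemma~\ref{thm:24}, $F$ thus factors uniquely through $i\:\sCfd\to\sC$, yielding a homotopy equivalence $\Hom(\bordfr n,\sC)\simeq\Hom(\bordfr n,\sCfd)$. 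Since $\sCfd$ has duals, every symmetric monoidal homomorphism into it preserves duals, so Theorem~\ref{thm:13} applies with the target $\sCfd$ and gives that evaluation at the generator is an equivalence $\Hom(\bordfr n,\sCfd)\simeq\sCfdt$. The composite of these two equivalences is exactly $F\mapsto F(\pp)$; as both sides are spaces (the source by the discussion of Remark~\ref{thm:31}, the target by construction), this is the asserted homotopy equivalence.

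It remains to prove Theorem~\ref{thm:13}, which is the genuine content. Let $\mathcal{F}$ be the free symmetric monoidal $\infn$-category with duals on a single generator; sending that generator to $\pp$ defines a comparison homomorphism $\Phi\:\mathcal{F}\to\bordfr n$, and I must show $\Phi$ is an equivalence. I would argue by induction on $n$, verifying at each categorical level that $\Phi$ is essentially surjective and fully faithful. Essential surjectivity is the \emph{generation} statement: every $k$-bordism is assembled from copies of $\pp$ and its iterated duality and adjunction data. This is precisely Morse theory (Remark~\ref{thm:34}): a generic function yields a handle decomposition presenting any bordism as a composite of elementary bordisms~\eqref{eq:31}, and each handle is one of the structural morphisms already present in $\mathcal{F}$. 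Hence every morphism of $\bordfr n$ lies in the image of $\Phi$ up to equivalence.

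Fullness and faithfulness amount to the \emph{relations}: two handle decompositions of the same bordism, or of an isotopy between bordisms, must be coherently identified by $\Phi$, with no spurious morphisms appearing. The mechanism is parametrized Morse theory. A single Cerf move—the birth--death transition~\eqref{eq:3}, cf.\ Figure~\ref{fig:11}—records the basic relation between two decompositions; to control \emph{all} relations and their higher coherences I would invoke the contractibility of a suitable space of ``framed functions'' on the bordism, in the sense of Igusa and Eliashberg. Feeding this into the inductive step lets a statement about the space of $k$-morphisms in $\bordfr n$ be reduced to the cobordism hypothesis for $(\infty ,n-1)$-categories, using the fact that morphism categories of $\bordfr n$ are themselves bordism-like $(\infty ,n-1)$-categories (with prescribed boundary conditions).

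The hard part will be the relations and the parametrized Morse theory underlying them. Generation via handle decompositions is classical; by contrast, showing that the space of decompositions is sufficiently contractible that all higher relations hold automatically requires the full strength of framed-function technology, and it is exactly this input that makes the induction on $n$ succeed. In dimension two the argument can be carried out essentially by hand (the Hopkins--Lurie case), which is why it was settled first; the general case is where the genuine difficulty—and the bulk of the detailed sketch in~\cite{L1}—resides.
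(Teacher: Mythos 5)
Your sketch lines up with the paper's own treatment: the paper does not actually prove Theorem~\ref{thm:26} either, and its ``rough intuition'' rests on precisely the ingredients you name---generation of $\bordfr n$ by elementary bordisms via Morse theory, and coherence of the resulting choices via contractibility statements (contractibility of the space of duality data, plus Cerf theory upgraded to Igusa's connectivity theorem for generalized framed Morse functions~\cite{C,I,EM}), with the genuinely hard part being the higher-categorical organization of the induction, which both you and the paper defer to~\cite{L1}. Your preliminary formal step---making Theorem~\ref{thm:13} precise as a freeness statement and deducing Theorem~\ref{thm:26} from it via the factorization~\eqref{eq:36} through $\sCfd$---is a repackaging of what the paper takes Theorem~\ref{thm:26} to mean rather than a different argument, so the two approaches are essentially the same.
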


\noindent
 At this point the reader should refer back to the heuristic versions stated
in~\S\ref{sec:1} as well as the discrete 1-dimensional version in
Theorem~\ref{thm:9}.  In particular, the cobordism hypothesis is a theorem
about smooth manifolds and their diffeomorphism groups, which is reflected by
the method of proof.
 
Suppose $W$~is a bordism of dimension~$k\le n$ which is $n$-framed.  Recall
that the $n$-framing is an isomorphism $(n)\to (n-k)\oplus TW$, where
$(j)$~denotes the trivial real vector bundle of rank~$j$ over~$W$.  The
orthogonal group\footnote{It is perhaps more natural to use the full general
linear group~$GL(n;\RR)$, but all of the topological information is carried
by the maximal compact subgroup $O(n)\subset GL(n;\RR)$.}~$O(n)$ acts on
framings by precomposition with constant orthogonal maps $(n)\to (n)$.  This
induces an action of~$O(n)$ on the space $\Hom(\bordfr n,\sC)$.

  \begin{corollary}[]\label{thm:27}
 There is a canonical action of the orthogonal group~$O(n)$ on the
space~$\sCfdt$.
  \end{corollary}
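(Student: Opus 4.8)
The plan is to transport, along the homotopy equivalence of Theorem~\ref{thm:26}, the $O(n)$-action on $\Hom(\bordfr n,\sC)$ described just above the statement. First I would make the source action precise. The group $O(n)$ acts on $\bordfr n$ itself: an element $g\in O(n)$ sends an $n$-framing $(n)\xrightarrow{\,\sim\,}(n-k)\oplus TW$ of a bordism~$W$ to its precomposition with the constant automorphism $g\:(n)\to(n)$. This re-framing is compatible with the $n$~composition laws and with disjoint union, so it is a symmetric monoidal automorphism of $\bordfr n$, and $g\mapsto(\text{re-framing by }g)$ is a homomorphism $O(n)\to\Aut^{\otimes}(\bordfr n)$. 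Precomposition of a field theory $F\:\bordfr n\to\sC$ with these automorphisms is exactly the action of $O(n)$ on the mapping space $\Hom(\bordfr n,\sC)$.

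Next I would transport this action to $\sCfdt$. By Theorem~\ref{thm:26} the evaluation $\ev_{\pp}\:\Hom(\bordfr n,\sC)\to\sCfdt$, $F\mapsto F(\pp)$, is a homotopy equivalence. The clean way to phrase the transport is via classifying spaces: a (homotopy-coherent) action of $O(n)$ on a space $X$ is the same datum as a map $BO(n)\to B\Aut(X)$, where $\Aut(X)$ is the grouplike space of self-equivalences of~$X$; equivalently it is a local system of spaces over $BO(n)$ with fibre~$X$. Since a homotopy equivalence $X\simeq Y$ induces an equivalence $\Aut(X)\simeq\Aut(Y)$, and $B\Aut(-)$ is functorial for such equivalences, the classifying map for the action on $\Hom(\bordfr n,\sC)$ composes with $B\Aut(\ev_{\pp})$ to give a classifying map $BO(n)\to B\Aut(\sCfdt)$. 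This is the desired $O(n)$-action on $\sCfdt$, and by construction $\ev_{\pp}$ is equivariant for it.

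The main obstacle is the word \emph{canonical}. Because the homotopy inverse to $\ev_{\pp}$ is defined only up to coherent homotopy, one cannot naively conjugate the strict action $\alpha_g=(-)\circ(\text{re-framing by }g)$ by $\ev_{\pp}$ and expect the relations $\beta_g\beta_h=\beta_{gh}$ to hold on the nose; the real content is to organize the transport homotopy-coherently. Phrasing everything through $B\Aut(-)$ as above sidesteps the choice of an inverse entirely: the transport becomes the functorial image of the single equivalence $\ev_{\pp}$ and so requires no incoherent choices. What remains to check is that the resulting action is independent of the auxiliary data implicit in Theorem~\ref{thm:26}---in particular of the chosen witness that $\ev_{\pp}$ is an equivalence---which again follows from the naturality of $B\Aut(-)$, since any two such witnesses are themselves connected by a (contractible space of) homotopies.
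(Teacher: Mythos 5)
Your proposal is correct and follows essentially the same route as the paper, whose (implicit) proof of the corollary is exactly to transport the re-framing action of $O(n)$ on $\Hom(\bordfr n,\sC)$ across the homotopy equivalence $F\mapsto F(\pp)$ of Theorem~\ref{thm:26}. Your $B\Aut(-)$ formulation just makes explicit the homotopy-coherent transport that the paper leaves unstated.
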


Let $G$~be a Lie group equipped with a homomorphism $\rho \:G\to O(n)$.  A
\emph{$G$-structure} on a bordism~$W$ is a reduction of structure group of
its tangent bundle to~$G$ along~$\rho $.  More precisely, choose a Riemannian
metric on~$W$ (this is a contractible choice).  Then a $G$-structure is a
principal $G$-bundle $P\to W$ together with an isomorphism of the associated
$G$-bundle~$\rho (P)$ with the bundle of orthonormal frames of~$(n-k)\oplus
TW $.  For example, for $G=\{e\}$ a $G$-structure is an $n$-framing, and for
$G=SO(n)$ it is an orientation.  There is a bordism category~$\bordG n$ of
manifolds with $G$-structure.

  \begin{theorem}[Cobordism hypothesis: $G$-structure version]\label{thm:28}
 The map 
  \begin{equation}\label{eq:38}
     \begin{aligned} \Hom(\bordG n,\sC)&\longrightarrow
     \bigl(\sCfdt\bigr)^{hG} \\ 
      F&\longmapsto F(\pp)\end{aligned} 
  \end{equation}
is a homotopy equivalence between the space of extended topological field
theories on $G$-manifolds and the homotopy fixed point space of the
$G$-action on~$\sCfdt$. 
  \end{theorem}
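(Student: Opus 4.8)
The plan is to reduce the $G$-structure version to the framed version (Theorem~\ref{thm:26}), exploiting the $O(n)$-action recorded in Corollary~\ref{thm:27}. The strategy rests on recognizing $\bordG n$ as a homotopy quotient of $\bordfr n$ and then using that a contravariant mapping-out functor converts homotopy colimits into homotopy limits.

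First I would express $G$-structures as a tangential structure of homotopy-quotient type. A $G$-structure on a $k$-bordism~$W$ is a reduction along~$\rho$ of the structure group of the stabilized tangent bundle $(n-k)\oplus TW$; equivalently it is a lift of the classifying map $W\to BO(n)$ through $B\rho\: BG\to BO(n)$. An $n$-framing is the special case of a lift through the contractible total space $EO(n)\to BO(n)$. Because $G$ acts on $EO(n)$ through~$\rho$ and the Borel construction on a contractible space recovers the classifying space,
\begin{equation}
EO(n)\gpd G\;\simeq\;BG,
\end{equation}
the tangential structure defining $\bordG n$ is the homotopy quotient by~$G$ of the tangential structure defining $\bordfr n$. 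The first step is then to promote this to an equivalence of symmetric monoidal $\infn$-categories
\begin{equation}
\bordG n\;\simeq\;\bordfr n\gpd G,
\end{equation}
with $G$ acting on $\bordfr n$ through~$\rho$ and the $O(n)$-action on framings.

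Granting that equivalence, the remaining steps are formal. Since $\Hom(-,\sC)$ sends the homotopy colimit defining a homotopy quotient to the corresponding homotopy limit, one obtains
\begin{equation}
\Hom(\bordG n,\sC)\;\simeq\;\Hom(\bordfr n,\sC)^{hG}.
\end{equation}
Now I would invoke Theorem~\ref{thm:26} $G$-equivariantly: it identifies $\Hom(\bordfr n,\sC)$ with $\sCfdt$, and Corollary~\ref{thm:27} says this identification intertwines the $O(n)$-action on framings with the canonical $O(n)$-action on $\sCfdt$. Restricting along~$\rho$ and passing to homotopy fixed points gives the composite
\begin{equation}
\Hom(\bordG n,\sC)\;\simeq\;\Hom(\bordfr n,\sC)^{hG}\;\simeq\;\bigl(\sCfdt\bigr)^{hG}.
\end{equation}
Finally one checks that under this chain the image of the framed point, now carrying its tautological $G$-structure, is exactly the evaluation $F\mapsto F(\pp)$, which is the asserted map.

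The main obstacle is the first step: establishing $\bordG n\simeq\bordfr n\gpd G$ at the full $(\infty,n)$-categorical level, not merely on underlying spaces of tangential structures. Concretely, one must know that the assignment $X\mapsto\Bord_n^X$ from tangential structures to bordism categories carries the homotopy quotient $EO(n)\gpd G$ to the homotopy quotient $\bordfr n\gpd G$---that is, that this assignment preserves the relevant homotopy colimit---and that the $O(n)$-action is coherent across every layer of bordism data: the composition laws, the gluing of manifolds with corners, and the diffeomorphisms and iterated isotopies constituting the morphisms above dimension~$n$. This is exactly where the delicate foundations of the bordism $\infn$-category enter, and it rests on the same fibration-of-structured-manifolds and Morse-theoretic technology that underlies the proof of Theorem~\ref{thm:26}.
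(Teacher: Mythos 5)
Your proposal is correct and is essentially the argument the paper intends---more precisely, the argument of Lurie's~\cite{L1}, to which the paper defers without reproducing a proof: Theorem~\ref{thm:28} is deduced from the framed version (Theorem~\ref{thm:26}) by identifying $\bordG n$ with the homotopy quotient $\bordfr n\gpd G$ and using that $\Hom(-,\sC)$ converts this homotopy colimit into homotopy fixed points, the equivariance of $F\mapsto F(\pp)$ being automatic because Corollary~\ref{thm:27} \emph{defines} the $O(n)$-action on $\sCfdt$ by transport along the framed equivalence. The obstacle you single out---that the assignment from tangential structures to bordism categories carries $EO(n)\gpd G\simeq BG$ to the homotopy quotient $\bordfr n\gpd G$ at the level of symmetric monoidal $\infn$-categories---is indeed where all the work resides, and it is exactly the portion of~\cite{L1} that the paper's exposition omits.
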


\noindent
 Here $G$~acts through the homomorphism $\rho \:G\to O(n)$ and the
$O(n)$-action given in Corollary~\ref{thm:27}. 

  \begin{example}[]\label{thm:29}
 For $n=2$ an oriented 2-dimensional theory is determined by the value
on~$\pp$, but in the fixed point space.  Consider $\sC=\CAlg$, as in
Example~\ref{thm:18}.  First, the 2-category~$\CAlg^{\textnormal{fd}}$ of
fully dualizable complex algebras has objects finite dimensional semisimple
algebras, i.e., finite products of matrix complex algebras.  (A proof may be
found in~\cite[\S3.2]{Da}.)  A point in the \emph{homotopy} fixed point space
of the $SO(2)$-action includes extra \emph{data}---in this case being a fixed
point is not a \emph{condition}---and the extra data here is the
nondegenerate trace~$\tau $ discussed in Example~\ref{thm:18};
see~\cite[Example~2.8]{FHLT} for details.
  \end{example}

We are not going to attempt to summarize the proof sketched in~\cite{L1} in
any detail.  Rather, we give a very rough intuition for why the cobordism
hypothesis might be true.  Our exposition in~\S\ref{sec:4}, in particular the
proof of Theorem~\ref{thm:9}, emphasizes the role of Morse theory.  The
existence of Morse functions allows the decomposition of a bordism into a
composition of elementary bordisms~\eqref{eq:31}.  These elementary bordisms
encode the evaluations and coevaluations, or units and counits, of duality
and adjointness data.  That is clear in the proof of Theorem~\ref{thm:9}.  As
another example, Figure~\ref{fig:15} may be read as a counit for the
adjunction between the two 1-morphisms coev, ev in Figure~\ref{fig:8}.  So if
$x\in \sC$ is fully dualizable, a choice of duality data---duals and adjoints
all the way up---defines~$F$ on elementary bordisms.  As arbitrary bordisms
are compositions of elementary bordisms, $F$~can be extended to arbitrary
bordisms.  In other words a Morse function gives, in principle, a way to
evaluate~$F(W)$ for every bordism~$W$.  The issue is whether $F(W)$~ is
well-defined.  The duality data involves choices, and we must be sure that
those choices can be made coherently.  This is expressed via contractibility
statements.  The first is that the space of duality data for a dualizable
object~$x$ is contractible.  The second generalizes the connectivity
statement at the heart of Cerf theory~\cite{C}.  Lurie uses a higher
connectivity theorem of Kiyoshi Igusa~\cite{I} for the space of
\emph{generalized framed Morse functions}.  Such functions relax the
nondegeneracy condition at a critical point to allow a single degeneracy, as
in ~\eqref{eq:3}, and also include a framing of the negative definite
subspace at a critical point.  Igusa proves that on a $k$-dimensional
manifold this space is $k$-connected.\footnote{It is in fact a consequence of
the cobordism hypothesis that this space of functions is weakly contractible.
This has been proved independently of the cobordism hypothesis in~\cite{EM}
and also in unpublished work of Galatius.}  These contractibility statements
are central to the proof, but it is a highly nontrivial problem to organize
the higher categorical data to apply these theorems.  The solution to that
problem, described in detail in~\cite{L1}, is equally central to the proof.

   \section{Implications, extensions, and applications}\label{sec:7}

Some brief vignettes illustrate the scope of the extended topological field
theory and the cobordism hypothesis.

  \subsection*{Invertible theories and Madsen-Tillmann spectra}

Recall from Lemma~\ref{thm:25} that any $\infn$-category~$\sD$ has an
underlying $\infty$-groupoid~$\sDt\to\sD$, which may be identified with a
space.  There is a quotient construction as well.

  \begin{lemma}[]\label{thm:33}
  Let $\sD$ be an $\infn$-category.  There is an $\infty$-groupoid~$\bsD$
and a homomorphism $q\:\sD\to\bsD$ so that (i)\ every $k$-morphism, $k>0$,
in~$\bsD$ is invertible, and (ii)\ $q\:\sD\to\bsD$ is universal with respect
to~(i). 
  \end{lemma}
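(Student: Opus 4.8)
The plan is to realize $\bsD$ via a universal property dual to the one in Lemma~\ref{thm:25}, using a localization (formal inversion of morphisms) rather than the restriction (removal of noninvertible morphisms) that produced $\sDt$. The key conceptual point is that $\sDt \to \sD$ is a \emph{coreflection}—a right adjoint to the inclusion of $\infty$-groupoids into $\infn$-categories—whereas $\sD \to \bsD$ is a \emph{reflection}, i.e.\ a left adjoint to the same inclusion. Both adjoints exist because the $\infty$-groupoids sit inside the $\infn$-categories as a full subcategory that is closed under the relevant limits and colimits, so the adjoint functor theorem for presentable $\infty$-categories applies.

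First I would fix a model for the $\infty$-category of (small) $\infn$-categories, say one of the models in \cite{Ba,R,Be}, and identify $\infty$-groupoids as the full subcategory spanned by those objects in which every $k$-morphism with $k>0$ is invertible; by Remark~\ref{thm:19} these are precisely the spaces. Next I would construct $\bsD$ as the localization of $\sD$ at the collection of \emph{all} its positive-dimensional morphisms: formally adjoin an inverse to each $k$-morphism for $k>0$. The resulting $\bsD$ is an $\infty$-groupoid by construction, establishing~(i), and the localization functor $q\:\sD\to\bsD$ is characterized by the universal mapping property that any homomorphism $\sD\to\sE$ sending every positive-dimensional morphism to an invertible one factors uniquely (up to contractible choice) through $q$. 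Restricting the target $\sE$ to range over $\infty$-groupoids gives exactly~(ii): $q$ is initial among homomorphisms from $\sD$ to $\infty$-groupoids.

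To make the universal property precise, I would show that for every $\infty$-groupoid~$\sE$ the induced map
\begin{equation*}
  q^*\:\Hom(\bsD,\sE)\longrightarrow \Hom(\sD,\sE)
\end{equation*}
is a homotopy equivalence. The essential surjectivity and full faithfulness of $q^*$ both follow from the defining property of a localization: any homomorphism out of $\sD$ into a target in which all the inverted morphisms are already invertible—which is automatic when the target $\sE$ is an $\infty$-groupoid—lifts essentially uniquely across $q$.

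The main obstacle I anticipate is \emph{not} the formal localization, which is standard once a model is fixed, but rather the interaction between the localization and the symmetric monoidal structure that $\bsD$ inherits when $\sD$ is symmetric monoidal (as it is in all the applications here). One must check that the class of positive-dimensional morphisms being inverted is compatible with the monoidal product—closed under tensoring with arbitrary morphisms—so that $\bsD$ is again symmetric monoidal and $q$ is a symmetric monoidal homomorphism. This is where the genuine work lies; it amounts to verifying that the localization is a \emph{monoidal} localization, which in turn reduces to checking that the tensor product of $\sD$ preserves the relevant colimits used to build $\bsD$. Everything downstream—the identification of $\bsD$ with the classifying space used in the Madsen-Tillmann discussion—is then formal.
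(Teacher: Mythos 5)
Your proposal is correct in substance, but there is nothing in the paper to compare it against: Lemma~\ref{thm:33} is stated without proof in this expository account (its companion, Lemma~\ref{thm:25}, is attributed to \cite[\S2.4]{L1}, and the notation~$\bsD$ together with the ensuing identification of $|\bord n|$ with suspensions of Madsen-Tillmann spectra signals that the intended construction is the classifying space). Your argument supplies the standard proof that the paper omits: the observation that Lemma~\ref{thm:25} is a coreflection (right adjoint to the inclusion of $\infty$-groupoids) while Lemma~\ref{thm:33} asks for the reflection (left adjoint); existence of that left adjoint via the adjoint functor theorem, since $\infty$-groupoids form a full subcategory of the presentable $\infty$-category of $\infn$-categories closed under limits and filtered colimits; and verification of~(ii) from the universal property of localization, because a homomorphism into an $\infty$-groupoid automatically inverts every positive-dimensional morphism. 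Two refinements are worth recording. First, ``(i) holds by construction'' needs one more line: every morphism of the localization is a zigzag composite of images of morphisms of~$\sD$ and formal inverses, all of which are invertible, so the localization really is an $\infty$-groupoid. Second, you misplace the ``genuine work'': the lemma as stated involves no monoidal structure, and when the symmetric monoidal refinement is needed later in that section (for $|\bord n|$ and~$\sCfdt$), the compatibility you worry about is vacuous here---the class being inverted consists of \emph{all} $k$-morphisms, $k>0$, so it is trivially closed under tensoring with anything. The genuinely nontrivial input in that part of the paper (computing the homotopy type of~$|\bordG n|$) comes from the cobordism hypothesis itself, not from Lemma~\ref{thm:33}.
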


These constructions are relevant to \emph{invertible topological field
theories}.  We say an object~$x$ in a symmetric monoidal $\infn$-category is
\emph{invertible} if it has an inverse~$y$ for the monoidal structure:
$x\otimes y$~is isomorphic to the unit object.

  \begin{definition}[]\label{thm:32}
 A topological field theory $\alpha \:\bord n\to\sC$ is \emph{invertible} if
$\alpha (W)$~is invertible for all objects and morphisms~$W$. 
  \end{definition}

\noindent
 It follows from the cobordism hypothesis that $\alpha $~is invertible if and
only if $\alpha (\pp)$~is invertible.  By the universal properties an
invertible field theory $\alpha \:\bord n\to\sC$ factors through~$|\bord n|$
and~$\sCfdt$:
  \begin{equation}\label{eq:39}
     \xymatrix{\bord n\ar[r]^(.6)\alpha \ar[d]_q&\sC\\ |\bord
     n|\ar[r]^(.55){\tilde\alpha }& \sCfdt\ar[u]_j} 
  \end{equation}
Since $\bord n$~and $\sC$~are symmetric monoidal, so too are~$|\bord n|$
and~$\sCfdt$.  An $\infty$-groupoid is equivalent to a space
(Remark~\ref{thm:19}), and a symmetric monoidal $\infty$-groupoid is
equivalent to an infinite loop space, i.e., the 0-space of a spectrum.
Furthermore, $\tilde\alpha $~is an infinite loop space map.  This reduces the
study of invertible topological field theories to a problem in stable
homotopy theory.

  \begin{remark}[]\label{thm:35}
 Invertible field theories play a role in ordinary quantum field theory, for
example as \emph{anomalies}. 
  \end{remark}

A corollary of the cobordism hypothesis~\cite[\S2.5]{L1} determines the
homotopy type of the spectrum $|\bord n|$.  Consider first the bordism
$\infn$-category $\bordfr n$ of $n$-framed manifolds.  The cobordism
hypothesis, in the heuristic form Theorem~\ref{thm:13}, asserts that $\bordfr
n$ is free on one generator.  It follows that so too is $|\bordfr n|$.  The
latter is a spectrum, and the free spectrum on one generator is the sphere
spectrum.  For the bordism $\infn$-category of $G$-manifolds $\bordG n$ the
cobordism hypothesis in the form Theorem~\ref{thm:28} implies that $|\bordG
n|$ is the $n^{\textnormal{th}}$~ suspension of a Madsen-Tillmann spectrum.
(These spectra are mentioned in~\S\ref{sec:2} before Definition~\ref{thm:4}.)

An $\infty$-groupoid---or $(\infty ,0)$-category---is a model for a space.
We may view an $\infn$-category as a generalization of a space which allows
noninvertibility.  From that perspective the cobordism hypothesis is a
generalization of the Madsen-Weiss theorem.

  \subsection*{Variations on the cobordism hypothesis}

Morrison and Walker~\cite{MoW} take a somewhat different, but closely
related, approach to extended topological field theories which incorporates
dualizability from the beginning.

In~\cite[\S4]{L1} Lurie describes several applications and extensions of the
cobordism hypothesis.  One important extension is to manifolds with
singularities, though there are many special cases which do not in fact
involve singularities.  To illustrate, in Example~\ref{thm:18} we described a
2-dimensional oriented field theory~$F$ associated to a Frobenius
algebra~$A$.  Now suppose that $M$~is a left $A$-module.  Recall that
$M$~determines a 1-morphism $M\:1\to A$ in the Morita 2-category of algebras,
where the tensor unit~$1$ is the trivial algebra~$\CC$.  We might ask what
sort of field theory we can associate to the pair~$(A,M)$, assuming
sufficient finiteness..  A physicist might describe~$M$ as giving a
\emph{boundary condition} for~$F$, and so extend~$F$ to a field theory~$\tF$
in which some boundaries are ``colored'' with the boundary condition~$M$.
For example, a closed interval with one endpoint colored is associated to~$M$
as a left $A$-module; the closed interval with both endpoints colored is
associated to~$M$ as a vector space.  The coloring represents a coning off of
a point, which is viewed as a manifold with singularities.  This is just the
tip of the iceberg of possibilities opened up by the cobordism hypothesis
with singularities.
 
From the point of view of algebra, given that $\bordfr n$ is the free
symmetric monoidal $\infn$-category with duals on one generator, we might ask
how to describe more general symmetric monoidal $\infn$-categories specified
by generators and relations.  Roughly speaking, the cobordism hypothesis with
singularities identifies these as bordism categories of manifolds with
singularities.

  \subsection*{Applications to topology}

We indicated briefly in Example~\ref{thm:20} the important role that
Chern-Simons theory played in the development of extended topological quantum
field theories.  That theory encodes invariants of 3-manifolds and links.
Newer invariants of links and low dimensional manifolds were in part inspired
by notions in extended field theory.  Crane and Frenkel~\cite{CF} suggested
that ``categorification'' of the 3-dimensional invariants would lead to new
invariants, potentially related to Donaldson invariants.  Later
Khovanov~\cite{Kh} introduced such a categorification of the Jones
polynomial.  This now has a proposed derivation from quantum field
theory~\cite{GSV,W4}.

There is current research in many directions which will potentially take
advantage of more powerful aspects of extended field theories and the
cobordism hypothesis in contexts which are not discrete and semisimple.  For
example, the cobordism hypothesis illuminates string topology invariants and
topological versions of Hochschild homology and its cousins~\cite{BCT}.  It
also appears in several discussions of the 2-dimensional extended topological
field theories relevant for mirror symmetry: the ``A-model'' and the
``B-model''.  There is an enormous literature on this subject; see~\cite{Te2}
for one recent example which uses ideas around the cobordism hypothesis.

  \subsection*{Applications to algebra}

Now we shift focus from topology and bordism categories to the
codomain~$\sC$.  Quite generally a homomorphism in algebra organizes the
codomain according to the structure of the domain.  This principle is often
applied in the context of group actions on sets, for example: the structure
of orbits and stabilizers illuminates the situation at hand.  Here if
$F\:\bord n\to\sC$ is a homomorphism, and $F(\pp)=x$ then we can study~$x$
using smooth manifolds and their gluings.
 
One application is to \emph{$E_k$-algebras}, which are objects in a symmetric
monoidal category which have $k$~associative composition laws.  We met
$E_1$-algebras (ordinary associative algebras) in the category~$\CVect$ of
complex vector spaces in Example~\ref{thm:18}.  An $E_2$-algebra in~$\CVect$
is a commutative algebra and there is nothing higher up: an $E_k$-algebra
for~ $k>2$ is also a commutative algebra.  More interesting examples are
obtained if we look in other symmetric monoidal categories, for example the
$\infty $-category of chain complexes.  In~\cite[\S4.1]{L1} Lurie describes
some relationships between the cobordism hypothesis and $E_k$-algebras in
$\infn$-categories.  In particular, an $E_k$-algebra~$A$ in an
$\infn$-category~$\sC$ is automatically $k$-dualizable, so determines a
homomorphism $F\:\bordfr k\to E_k(\sC)$, where $E_k(\sC)$~is the $(\infty
,n+k)$-category whose objects are $E_k$-algebras in~$\sC$.  Thus
$E_k$-algebras may be studied with smooth manifolds.  For example, if $A$~is
an ordinary algebra ($E_1$-algebra), then in the associated field theory
$F(\cir)$~is the Hochschild homology of~$A$ (see~\eqref{eq:33} for a simple
example).  Since the circle is an $E_2$-algebra in the bordism category, so
too is the Hochschild homology~$F(\cir)$.  This assertion is the Deligne
conjecture, which together with generalizations is proved in many works, for
example~\cite{Co,KS,L2,BFN}.  (We remark that there are several other proofs
of the Deligne conjecture.)
 
As another application of the cobordism hypothesis to algebra, we mention
ongoing work~\cite{DSS} which proves that a \emph{fusion category}~\cite{ENO}
is 3-dualizable.  A fusion category is a special type of \emph{tensor
category}, and a tensor category is an $E_1$-algebra in the 2-category of
linear categories.  So tensor categories form a 3-category, and it is in that
3-category that fusion categories are fully dualizable.  The associated
3-dimensional framed field theory can be brought to bear on the study of
fusion categories.  We remark that simple topological diagrams involving
0-~and 1-dimensional manifolds are usually used to study fusion categories
and their cousins.  The cobordism hypothesis opens up the possibility of
using the more powerful topology of 3-dimensional manifolds.  In related
ongoing work of the author and Teleman, we consider $E_2$-algebras in the
2-category of linear categories; they comprise the 4-category of
\emph{braided tensor categories}.  We prove that \emph{modular} tensor
categories are invertible, which now gives a 4-dimensional perspective on
quantum groups.

  \subsection*{Applications to representation theory}

In~\S\ref{sec:4} and in Example~\ref{thm:18} we illustrated a very simple,
discrete 2-dimensional field theory associated to a finite group~$G$.  There
is also a 3-dimensional field theory with values in the 3-category of tensor
categories; it attaches the tensor category of vector bundles over~$G$ under
convolution to~$\pt$.  (The theory is unoriented---as is the 2-dimensional
theory---so we have an unframed unoriented unadorned point.)  That theory may
be viewed as the simplest case of 3-dimensional Chern-Simons theory
(Example~\ref{thm:20}).  Ben-Zvi and Nadler~\cite{BN} study the analogous
theory for a reductive complex group~$G$.  Discrete categories are futile
here; the full force of $\infty $-categories comes into play.  One would like
a 3-dimensional theory which generalizes that of a finite group, and now
attaches the symmetric monoidal $\infty $-category of $\sD$-modules on~$G$ to
a point.  However, the necessary finiteness conditions are not satisfied.
Instead, they construct a related 2-dimensional field theory, the
\emph{character theory}, which assigns to a point the \emph{Hecke category}
associated to~$G$.  Then one computes that the category of Lusztig's
\emph{character sheaves} is attached to~$\cir$.  The character theory may be
viewed as a dimensional reduction of a 4-dimensional field theory~\cite{KW}
related to the geometric Langlands program.  It seems likely that the
topological field theory perspective, and the cobordism hypothesis, will shed
light on old questions in the representation theory of semisimple Lie groups.

  \subsection*{Echos in quantum field theory}

As mentioned earlier, quantum field theorists traditionally only studied
\emph{2-tier} theories: correlation functions on $n$-manifolds and Hilbert
spaces attached to~$(n-1)$-manifolds.  In recent years the ideas
mathematicians have developed around extended field theories, including the
cobordism hypothesis, have seeped into physics.  In 2-dimensional conformal
field theory there is a category of boundary conditions, called
\emph{D-branes}, and in topological versions this is understood to be part of
an extended field theory.  Higher dimensional analogs are now common;
see~\cite{Kap} for a recent review.  For example, Kapustin-Witten~\cite{KW}
study a topological twist of the 4-dimensional $N=4$~ supersymmetric
Yang-Mills theory.  Going beyond the traditional two tiers, this theory
attaches a category to every closed 2-manifold.  Kapustin-Witten relate that
to a category which appears in the geometric Langlands program.  The story is
richer: there is a family of theories parametrized by~$\CP^1$ and
\emph{$S$-duality} acts as an involution on the theories.  This suggests an
equivalence between two different categories attached to a 2-manifold, which
is a topological version of the basic conjecture in the geometric Langlands
program.
 
The maximally supersymmetric $N=4$~Yang-Mills theory is the dimensional
reduction of a 6-dimensional supersymmetric field theory which has
superconformal invariance.  Its name `the (2,0) superconformal field theory
in six dimensions' reflects its symmetry group; a simpler name is
`Theory~$\sX$'.  This theory has no classical description.  It is predicted
to exist from limiting arguments in string theory.  Its mysterious nature
justifies the appellation `Theory~$\sX$', as does its dimension: si$\sX$.  A
few properties can be predicted from string theory, and these can be used to
study compactifications to lower dimensions.  Among the many protagonists
here we mention Gaiotto~\cite{Ga} and Gaiotto-Moore-Neitzke~\cite{GMN}.  One
important idea---which is clearly inspired by extended field theory and the
activity surrounding the cobordism hypothesis---is to study compactifications
of the 6-dimensional theory as a function of the compactifying manifold.
This is formalized as follows.  Suppose $F\:\bord 6\to\sC$ is a 6-dimensional
extended topological theory.  Then for any closed 2-manifold~$N$ we obtain a
4-dimensional theory~$F_N\:\bord 4\to\Hom_{\sC}\bigl(F(N),F(N) \bigr)$
defined using Cartesian product:\footnote{The bordism groups of Pontrjagin
and Thom are \emph{rings} with multiplication given by Cartesian product.
Our discussion of topological field theory has not used this ring structure
until now.}  $F_N(M)=F(N\times M)$.  Now view~$F_N$ as a function of~$N$.
Then we obtain a 2-dimensional extended field theory with values in the
$(\infty ,4)$-category of 4-dimensional field theories!  The flexibility in
Definition~\ref{thm:17} which allows arbitrary codomains is heavily used
here.  One can get other field theories by composing with homomorphisms out
of 4-dimensional theories.  A recent paper~\cite{MoT} implements this idea in
a physics context, and predicts the existence of certain holomorphic
symplectic manifolds.
 
Finally, the renewed interest in $E_n$-algebras and their role in extended
topological field theories may bring some fresh perspectives to quantum field
theories which are not topological.  One axiomatic approach to quantum field
theory~\cite{H} assigns operator algebras to open sets and describes how they
fit together.  This idea was imported in an algebro-geometric framework in
certain mathematical approaches to 2-dimensional conformal field theory, in
vertex operator algebras~\cite{Bo} and chiral algebras~\cite{BeDr}.  These
ideas are circling back to general quantum field theories~\cite{CG} with
potential to shed new light on their structure.

\bigskip\bigskip

These are only a few examples of the potential that extended topological
field theories and the cobordism hypothesis hold in both mathematics and
physics.

 \bigskip\bigskip
\providecommand{\bysame}{\leavevmode\hbox to3em{\hrulefill}\thinspace}
\providecommand{\MR}{\relax\ifhmode\unskip\space\fi MR }
\providecommand{\MRhref}[2]{%
  \href{http://www.ams.org/mathscinet-getitem?mr=#1}{#2}
}
\providecommand{\href}[2]{#2}

  \end{document}